\newcommand{\eqref}[1]{(\ref{#1})}
\newcommand{\Prob}{\mathrm{P}}
\newtheorem{lemma}{Lemma}
\newtheorem{proposition}{Proposition}
\begin{document}
\begin{frontmatter}

\title{Exact confidence intervals and hypothesis tests for parameters
of discrete distributions}
\runtitle{Exact intervals and tests for discrete distributions}

\begin{aug}
\author[A]{\inits{M.}\fnms{M{\AA}ns}~\snm{Thulin}\corref{}\thanksref{A}\ead[label=e1]{mans.thulin@statistik.uu.se}}
\and
\author[B]{\inits{S.}\fnms{Silvelyn}~\snm{Zwanzig}\thanksref{B}\ead[label=e2]{zwanzig@math.uu.se}}
\address[A]{Department of Statistics, Uppsala University, 751 05 Uppsala, Sweden.\\ \printead{e1}}
\address[B]{Department of Mathematics, Uppsala University, 751 05 Uppsala, Sweden. \\ \printead{e2}}
\end{aug}

%
\received{\smonth{12} \syear{2014}}
%
\revised{\smonth{3} \syear{2015}}

\begin{abstract}
We study exact confidence intervals and two-sided hypothesis tests for
univariate parameters of stochastically increasing discrete
distributions, such as the binomial and Poisson distributions. It is
shown that several popular methods for constructing short intervals
lack strict nestedness, meaning that accepting a~lower confidence level
not always will lead to a shorter confidence interval. These intervals
correspond to a class of tests that are shown to assign differing
$p$-values to indistinguishable models. Finally, we show that among
strictly nested intervals, fiducial intervals, including the
Clopper--Pearson interval for a binomial proportion and the Garwood
interval for a Poisson mean, are optimal.
\end{abstract}

\begin{keyword}
\kwd{binomial distribution}
\kwd{confidence interval}
\kwd{expected length}
\kwd{fiducial interval}
\kwd{hypothesis test}
\kwd{Poisson distribution}
\end{keyword}
\end{frontmatter}

\section{Introduction}\label{introduction}
Hypothesis testing and interval estimation of parameters in discrete
distributions are two of the classic statistical problems, particularly
for the binomial and Poisson distributions, which remain two of the
most important statistical models. The fact that these distributions
are discrete makes it impossible to construct non-randomized confidence
intervals that have coverage equal to $1-\alpha$ for all values of the
unknown parameter $\theta$, and, equivalently, impossible to construct
two-sided tests with size equal to $\alpha$ for all pairs $(\alpha
,\theta_0)$, where $\theta_0$ denotes the value of $\theta$ under the
null hypothesis. It is however possible to construct confidence
intervals that have coverage at least equal to $1-\alpha$ for all
values of the unknown parameter, and tests that have size at most equal
to $\alpha$. Such intervals and tests are called exact, and are the
topic of this paper.


Given an observation $x$, the classic method of constructing exact
confidence intervals for parameters of some common discrete
distributions is to use the fiducial interval of Fisher \cite{fisher30,wang00}: $(\theta_L,\theta_U)$ where $\theta_L$ and $\theta
_U$ are such that
%
\begin{equation}
\label{fidint} \sum_{k\leq x}\Prob_{\theta
_L}(X=k)=
\alpha/2 \quad\mbox{and}\quad \sum_{k\geq x}\Prob_{\theta
_U}(X=k)=
\alpha/2.
\end{equation}
For the binomial parameter, the fiducial interval is known as the
Clopper--Pearson interval \cite{cp1} and for the mean of a Poisson
distribution it is known as the Garwood interval \cite{garwood36}.

The hypothesis $H_0: \theta=\theta_0$ can be tested against the
alternative $H_1: \theta\neq\theta_0$ by checking whether $\theta_0$
is contained in the fiducial interval. The $p$-value $\lambda_f(\theta
_0,x)$ of this test is two times the smaller $p$-value of two one-sided tests:
%
\begin{equation}
\label{fidtest}
\lambda_f(\theta_0,x)=\min \biggl(2\cdot
\sum_{k\leq x}\Prob_{\theta
_0}(X=k),2\cdot\sum
_{k\geq x}\Prob_{\theta_0}(X=k),1 \biggr).
\end{equation}

In their seminal paper on binomial confidence intervals, Brown \textit{et al.} \cite{bcd1}
write: ``The Clopper--Pearson interval is wastefully conservative and is
not a good choice for practical use, unless strict adherence to the
prescription $C(p, n) > 1-\alpha$ is demanded,'' where $C(p,n)$ denotes
the coverage probability. Instead they recommend using approximate
intervals, which obtain the nominal confidence level $1-\alpha$ in some
average sense, but have lower coverage for some values of $\theta$.
Such intervals are typically shorter than exact intervals, and their
corresponding tests typically have higher power. These advantages comes
at the cost that the actual confidence levels may be much lower than
stated and that the size of tests may be inflated. For popular
approximate intervals, the deviations in coverage from $1-\alpha$ may
be non-negligible even for large sample sizes \cite{thu2}. For this
reason, some statistician prefer to use exact methods like those
discussed in this paper, in order to guarantee that confidence levels
are not exaggerated and type I error rates are not understated.

When other criteria than merely coverage levels and expected lengths
are considered, exact confidence intervals can moreover compare
favourably to approximate intervals \cite{vh2,ne2}. Finally, even if
one prefers to use average coverage as a criterion for comparing
confidence intervals, it is of interest to study exact intervals due to
the facts that these intervals can be adjusted to have coverage
$1-\alpha$ on average, and that such adjusted intervals tend to have
shorter expected length than other approximate intervals \cite{re1,thu1}. For comparisons of exact and approximate intervals in the
binomial setting, and further arguments for using exact methods for
discrete distributions, see~\cite{thu2}.

Regarding the fiducial Clopper--Pearson interval, Brown \textit{et al.} \cite{bcd1} also
write ``better exact methods are available; see, for instance, \cite{bs1} and \cite{ca1}.'' Fiducial intervals are
equal-tailed, meaning that the lower bound is a $1-\alpha/2$ lower
confidence bound and that the upper bound is a $1-\alpha/2$ upper
confidence bound.
Several authors, including those mentioned by Brown \textit{et al.} \cite{bcd1} in the above
quote, have proposed shorter exact intervals that improve upon fiducial
intervals by letting the tail-coverages vary for different $x$, so that
their bounds no longer are $1-\alpha/2$ confidence bounds \cite{st3,st2,crow56,bs1,ca1,casella89,bl1,byrne01b,lurz,schilling14,wang14}.
Such intervals, known as strictly two-sided intervals, tend to have
less conservative coverage and are typically shorter than fiducial
intervals. Their use has been advocated by \cite{agresti1,re1,agresti2,hirji06,fay10a,fay10b,sommerville13} and \cite{lp14}, among others.

Unlike the equal-tailed fiducial intervals, the $p$-values of tests
corresponding to strictly two-sided confidence intervals can not be
written as two times the smaller $p$-value of two one-sided tests.
Instead, for some test statistic $T(\theta_0,X)$ satisfying mild
regularity conditions detailed in Section~\ref{nested}, the $p$-value of
a strictly two-sided test is defined as
\[
\lambda(\theta_0,x)=\Prob_{\theta_0}\bigl( T(
\theta_0,X)\geq T(\theta_0,x)\bigr).
\]
If the  null distribution of $T(\theta_0,X)$ is asymmetric, the
level $\alpha$ rejection region of such a test is not the intersection
of the rejection regions of two one-sided level $\alpha/2$ tests.

The main goal of this paper is to show that strictly two-sided
confidence intervals and hypothesis tests suffer from several problems.
These are illustrated in Figure~\ref{fig0}, in which the $p$-values and
interval bounds for the mean of a Poisson distribution are shown for
two tests and their corresponding confidence intervals. The first of
these is the strictly two-sided Sterne \cite{st3} interval, the other being
the fiducial Garwood \cite{garwood36} interval.

\begin{figure}

\includegraphics{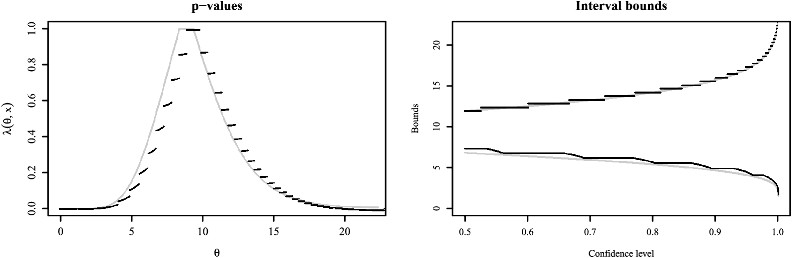}

\caption{$p$-values and interval bounds for the mean of a Poisson
distribution, when $x=9$ has been observed. The strictly two-sided
Sterne \cite{st3} method is shown in black, and the fiducial Garwood \cite{garwood36} method is shown in grey.}\label{fig0}
\end{figure}

In the spirit of Birnbaum \cite{birnbaum}, the $p$-values are plotted as a
function of the value $\theta_0$ of the parameter under the null
hypothesis. {In the Poisson model, it is reasonable to expect that a
small change in the null value of $\theta$ should lead to a small
change in the $p$-value, since $\Prob_\theta(X=x)$ is continuous in
$\theta$, so that there is no concernable difference between the
$\operatorname{Poisson}(\theta)$ and $\operatorname{Poisson}(\theta+\varepsilon)$ models when $\varepsilon$
is infinitesimal. This is not the case for the strictly two-sided test:
its $p$-value is discontinuous when viewed as a function of $\theta_0$.
The evidence against two models, which for all practical purposes are
indistinguishable, can therefore differ greatly. Several examples of
this are seen in Figure~\ref{fig0}; the $p$-value for $\theta
_0=4.954163$, for instance, is 0.0722, so that the null hypothesis is
rejected at the 10\% level, while the highly similar hypothesis $\theta
_0=4.954164$ cannot be rejected as its $p$-value is 0.1071.}

Moreover, we would expect that the $p$-value increases as $\theta_0$ goes
from 0 to the observed $x$, and that it thereafter decreases, since
this would mean that the $p$-value becomes smaller when the null
hypothesis agrees less with the data. This is not the case for the
strictly two-sided test. Instead, the $p$-value sometimes increases when
the null $\theta$ is changed to agree less with the observed $x$. {As
an example, consider the $p$-values shown in Figure~\ref{fig0}. When
$x=9$ has been observed from a Poisson distribution, the $p$-value when
$\theta_0=15.6$ is 0.0993, so that the null hypothesis is rejected at
the 10\% level. However, even though $x=9$ disagrees even more with
the null hypothesis $\theta_0=15.95$, the $p$-value for this $\theta_0$
is 0.1011, and the hypothesis can not be rejected.} The test
corresponding to the fiducial interval does not suffer from either of
these problems.



The strictly two-sided confidence interval is no better than its
corresponding test. When the interval bounds are plotted as functions
of the confidence level $1-\alpha$, we see two phenomenons. The first
is that the interval bounds are discontinuous in $1-\alpha$, meaning
that a small change in $\alpha$ can cause one of the interval bounds to
leap. The second is that the bounds sometimes are constant, meaning
that a change in $\alpha$ not necessarily will lead to a change in the
bounds. For some $\alpha$, both bounds remain unchanged in an interval
$(\alpha-\varepsilon,\alpha+\varepsilon)$. There is therefore no guarantee
that accepting a larger $\alpha$ will lead to a shorter interval; we
say that the interval is not \emph{strictly nested}. The fiducial
interval does not suffer from either of these problems.

{These properties can also cause strictly two-sided test and intervals
to behave strangely as more data is collected. As an example, consider
the Blaker  \cite{bl1} test for the negative binomial proportion $\theta$.
When $k=19$ successes are observed after $x=38$ trials, the maximum
likelihood estimator is $\hat{\theta}=0.5$ and Blaker $p$-value for the
test of the hypothesis $\theta=0.625$ is 0.0.0929, causing us to reject
the null hypothesis at the 10\% level. If we then decide to collect
more data by requiring that $k=20$ successes should be observed, and
observe one failure and one success so that $x=40$, $\hat{\theta}$ is
still $0.5$. We would now expect the $p$-value to decrease as this
outcome appears to be even less in line with $\theta=0.625$. Instead,
the Blaker $p$-value for $k=20$ and $x=40$ is 0.106, and we can no longer
reject the null hypothesis at the 10\% level. Analogous problems arise
for confidence intervals. The 90\% Blaker confidence interval for
$\theta$ given $k=19$ and $x=38$ is $(0.35992, 0.62279)$, while for
$k=20$ and $x=40$ it is $(0.36202, 0.62689)$. The latter interval is not,
as we normally would expect, a subset of the former. Moreover, the
interval based on more data is \emph{wider} than the interval based on
less data: the interval widths are 0.263 and 0.265, respectively.}

As we will see, intervals lacking strict nestedness is equivalent to
their corresponding $p$-values being discontinuous in $\theta$.
Consequently, intervals which are not strictly nested correspond to
tests that attach widely differing evidence to indistinguishable
hypotheses. {We believe that this is an unacceptable property of a
hypothesis test, and argue that such intervals and tests should be avoided.}

In this paper, we show that these problems are universal for strictly
two-sided intervals and tests, when the data is generated by a class of
discrete distributions that includes the binomial, Poisson and negative
binomial distributions. They also carry over to exact analysis of
contingency tables and discrete models with nuisance parameters, when
such analyses are based on conditioning that reduces the problem to a
one-parameter framework.

In Section~\ref{nested}, we give a formal description of the setting
for our results. We then show that the $p$-values of strictly two-sided
tests are discontinuous, and that their corresponding intervals have
bounds that are not strictly monotone. Finally, we show that strictly
two-sided intervals never are strictly nested, meaning that both
interval bounds simultaneously may remain unchanged when $\alpha$ is
changed. Section~\ref{strictly} is devoted to showing that strictly
two-sided intervals typically have bounds that moreover are
discontinuous in $\alpha$, and that the corresponding $p$-values lack
desirable monotonicity properties. In Section~\ref{optimality}, it is
then demonstrated that fiducial intervals not only are strictly nested
but also are the shortest equal-tailed intervals. The paper concludes
with a discussion in Section~\ref{discussion}. Most proofs and some
technical details are contained in two\vspace*{-3pt} appendices.

\section{The lack of strict nestedness and its implications}\label{nested}
\subsection{Setting}
This section is concerned with nestedness. We start by defining this concept.

\begin{definition}
A confidence interval is \emph{nested} if the $1-\alpha$ interval is a
subset of the $1-\alpha_0$ interval when $1>\alpha>\alpha_0>0$, and
\emph{strictly nested} if the $1-\alpha$ interval always is a proper
subset of the $1-\alpha_0$ interval.
\end{definition}

If an interval is not strictly nested, accepting a lower confidence
level does not always yield a shorter interval, so that sometimes
nothing is gained by increasing $\alpha$. Despite the importance of
nestedness, this property has not been discussed much in the
literature, likely because it is taken for granted. Notable exceptions
are Blaker \cite{bl1}, who proved that the binomial Blyth--Still--Casella
interval is not strictly nested and Vos and Hudson \cite{vh1}, who showed by example
that the Blaker interval for a binomial proportion lacks strict
nestedness. 

Next, we give some definitions and state the assumptions under which
strictly two-sided intervals are not strictly nested. We will limit our
study to parameters of discrete distributions $P_\theta$ belonging to a
class $\mathcal{P}(\Theta,\mathcal{X})$.

\begin{definition}\label{def2}
Let $\theta\in\Theta$ denote an unknown parameter, with $\Theta$ being
a connected open subset of~$\mathbb{R}$, and let $\mathcal{X}\subseteq
\mathbb{Z}$ be a sample space consisting of consecutive integers. A
family of distributions $P_\theta$ on $\mathcal{X}$ parameterized by
$\theta\in\Theta$ belongs to $\mathcal{P}(\Theta,\mathcal{X})$ if
\begin{longlist}[A3.]
\item[A1.] $\forall(\theta,x)\in\Theta\times\mathcal{X}$, $\Prob
_\theta(X=x)>0$,
\item[A2.] $P_\theta$ is stochastically increasing, i.e. $\Prob_\theta
(X\leq x)$ is strictly decreasing in $\theta$ for any fixed $x\in
\mathcal{X}\setminus\sup\mathcal{X}$,
%
\item[A3.] For any fixed $x\in\mathcal{X}$, $\Prob_\theta(X=x)$ is
differentiable in $\theta$.
\end{longlist}
\end{definition}

Conditions A1--A3 are satisfied by for instance the binomial, Poisson
and negative binomial distributions as long as $\Theta$ is the natural
parameter space, that is, as long as it has not been restricted. This
follows directly from the proposition below, the proof of which is
given in Appendix~\ref{proofs}. The conditions are typically also
satisfied for other common parameterizations.

\begin{proposition}\label{expfamprop}
If $P_\theta$ constitutes a regular discrete one-parameter exponential
family with an increasing likelihood ratio, where $\theta$ is the
natural parameter, then $P_\theta\in\mathcal{P}(\Theta,\mathcal{X})$.
\end{proposition}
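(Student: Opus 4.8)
The plan is to pass to the canonical representation of the family and verify the three conditions A1--A3 in turn, concentrating the work on the stochastic monotonicity A2. Writing the mass function in its natural form $\Prob_\theta(X=x)=h(x)\exp(\theta T(x)-A(\theta))$, where $h(x)>0$ on $\mathcal{X}$, $A$ is the log-normalizer, and $T$ is the sufficient statistic conjugate to the natural parameter $\theta$, the increasing-likelihood-ratio hypothesis translates into $T$ being nondecreasing in $x$. Conditions A1 and A3 are then almost immediate. A1 holds because $h(x)>0$ and the exponential factor is strictly positive, giving $\Prob_\theta(X=x)>0$ everywhere. A3 holds because the log-normalizer $A$ of a regular exponential family is infinitely differentiable (indeed real-analytic) on its open natural parameter space, so that $\theta\mapsto\Prob_\theta(X=x)$ is differentiable at every $\theta\in\Theta$; I would cite the standard smoothness result for the cumulant function rather than reprove it.

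The substance is A2, which I would obtain from a single-crossing argument. Fix $\theta_1<\theta_2$ and set $r(y)=\Prob_{\theta_2}(X=y)/\Prob_{\theta_1}(X=y)$; by the exponential form this ratio is nondecreasing in $y$. Because $r$ is a likelihood ratio between two distributions, $\sum_y (r(y)-1)\Prob_{\theta_1}(X=y)=0$, and because the two distributions are distinct, $r$ is nonconstant. Since the weights $\Prob_{\theta_1}(X=y)$ are strictly positive by A1, a nonconstant $r$ with weighted mean $1$ must take values both below and above $1$; monotonicity then forces a single crossing, so there is a threshold $m$ with $r(y)\le 1$ for $y\le m$ and $r(y)>1$ for $y>m$. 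I would then study $g(c)=\Prob_{\theta_2}(X\le c)-\Prob_{\theta_1}(X\le c)=\sum_{y\le c}(r(y)-1)\Prob_{\theta_1}(X=y)$ and show $g(c)<0$ for every $c\in\mathcal{X}\setminus\sup\mathcal{X}$.

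To close, I would split on the position of $c$ relative to $m$. For $c\le m$ every summand in $g(c)$ is nonpositive, while the term at $\inf\mathcal{X}$ is strictly negative: any $y$ with $r(y)<1$ satisfies $r(\inf\mathcal{X})\le r(y)<1$, and the corresponding weight is strictly positive by A1, so $g(c)<0$. For $c>m$ I would instead use $g(\sup\mathcal{X})=0$ --- valid in the limit even when $\mathcal{X}$ is unbounded, as for the Poisson and negative binomial cases --- to write $g(c)=-\sum_{y>c}(r(y)-1)\Prob_{\theta_1}(X=y)$, whose summands are all strictly positive, again giving $g(c)<0$. This shows $\Prob_\theta(X\le x)$ is strictly decreasing in $\theta$, which is A2. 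The main obstacle is securing the \emph{strict} inequality uniformly in $c$ rather than the classical weak stochastic ordering, together with a careful treatment of the limiting identity $g(\sup\mathcal{X})=0$ on infinite sample spaces; positivity A1 is precisely what upgrades the weak ordering to the strict version the definition demands.
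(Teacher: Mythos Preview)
Your proposal is correct and follows essentially the same route as the paper: A1 and A3 come directly from the exponential-family form and smoothness of the log-normalizer, and A2 is obtained from the single-crossing property of the monotone likelihood ratio by bounding the cdf difference on each side of the crossing point. If anything, your argument is more careful than the paper's in securing the \emph{strict} inequality required by A2, since the paper's proof as written only explicitly derives the weak ordering $F_{\theta_2}(s)\le F_{\theta_1}(s)$.
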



To fully understand the implications of the lack of nestedness, we will
study the hypothesis tests to which non-nested intervals correspond,
so-called strictly two-sided tests:

\begin{definition}\label{def31}
Consider a two-sided test of $H_0: \theta=\theta_0$ versus $H_1: \theta
\neq\theta_0$, with a test statistic $T(\theta_0,x)$. The test is
called \emph{strictly two-sided} if the $p$-value of the test is $\lambda
(\theta_0,x)=\Prob_{\theta_0}( T(\theta_0,X)\geq T(\theta_0,x))$ and it
satisfies conditions \textup{B1}--\textup{B2} below. Moreover, in case $\lambda(\theta
,x)$, viewed as a function of $\theta$, has a jump at $\theta_0$ we
define $\lambda(\theta_0,x)=\liminf_{\theta\rightarrow\theta_0}\lambda
(\theta,x)$.
\begin{longlist}[B2.]
\item[B1.] For any $x\in\mathcal{X}$, there exists a $\theta_x\in\Theta
$ such that $T(\theta_x,x)< T(\theta_x,y)$ for all $y\in\mathcal
{X}\setminus{\{x\}}$.
%
\item[B2.] There exists a $\theta_0\in\Theta$ such that there does not
exist a $\mu\in\Theta$ for which $\Prob_{\theta_0}(T(\theta_0,X)=\mu
-k)=\Prob_{\theta_0}(T(\theta_0,X)=\mu+k)$ for all $k: \mu\pm k\in
\mathcal{X}$.
\end{longlist}
\end{definition}

Condition B1 is included to ensure that the test does not yield the
same result for all $x$ and $\theta$. The name strictly two-sided comes
from condition B2, which ensures that the $p$-value must be computed by
comparing the test statistic to \emph{both} tails of the null
distribution simultaneously.

The $p$-value of a strictly two-sided test can be written as
%
\begin{equation}
\label{Aset}\lambda(\theta,x)=\sum_{k\in\mathcal
{A}_{\theta,x}}
\Prob_\theta(X=k) \qquad\mbox{where } \mathcal{A}_{\theta,x}=\bigl\{k\in
\mathcal{X}:T(\theta,k)\geq T(\theta,x)\bigr\}.
\end{equation}

For simplicity, we will assume that the test statistic is such that
\begin{longlist}[B3.]
\item[B3.] For any $\theta\in\Theta$, there exists $ x_\theta\in\mathcal
{X}$ such that $T(\theta,x)$ is decreasing in $x$ when $x<x_\theta$ and
increasing in $x$ when $x>x_\theta$.
\end{longlist}
%
Under B3, the set $\mathcal{A}_{\theta,x}$ has a particularly simple form.

\begin{proposition}\label{B3prop}
Under \textup{B3}, the functions $k_1(\theta,x):=\min\{k\geq x_\theta: T(\theta
,k)\geq T(\theta,x) \}$ and $k_2(\theta,x):=\max\{k\leq x_\theta:
T(\theta,k)\geq T(\theta,x) \}$ are such that
%
\begin{equation}
\label{Aset2}\mathcal{A}_{\theta,x}=\bigl\{k\in\mathcal{X}: k \geq
k_1(\theta,x)\bigr\}\cup\bigl\{k\in\mathcal{X}: k \leq
k_2(\theta,x)\bigr\}.
\end{equation}
For any $x$, at least one of $k_1(\theta,x)$ and $k_2(\theta,x)$ is
non-constant in $\theta$.
\end{proposition}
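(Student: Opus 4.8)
The plan is to prove the two assertions separately, using B3 for the structural decomposition and B1 for the non-constancy. For the form of $\mathcal{A}_{\theta,x}$, I would exploit that B3 makes $T(\theta,\cdot)$ a U-shaped function of its second argument, decreasing on $\{k\leq x_\theta\}$ and increasing on $\{k\geq x_\theta\}$, so that the super-level set defining $\mathcal{A}_{\theta,x}$ in \eqref{Aset} must split into two tails. Concretely, I would split $\mathcal{X}$ at $x_\theta$ and treat each side. On the right half $\{k\geq x_\theta\}$, where $T(\theta,\cdot)$ is increasing, the set $\{k\geq x_\theta: T(\theta,k)\geq T(\theta,x)\}$ is an upper set: if its minimum $k_1(\theta,x)$ exists then $T(\theta,k)\geq T(\theta,k_1)\geq T(\theta,x)$ for every $k\geq k_1$, while every $k$ with $x_\theta\leq k<k_1$ fails the inequality by minimality. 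Symmetrically, on the left half the decreasing monotonicity makes $\{k\leq x_\theta: T(\theta,k)\geq T(\theta,x)\}$ a lower set with maximum $k_2(\theta,x)$. Taking the union of the two halves yields \eqref{Aset2}. When one of the two defining sets is empty the corresponding tail is empty; since $x$ always satisfies $T(\theta,x)\geq T(\theta,x)$ and lies in one of the halves, at least one tail is nonempty, so no degeneracy arises.

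For the second assertion I would argue by contradiction, assuming that both $k_1(\cdot,x)$ and $k_2(\cdot,x)$ are constant in $\theta$. The idea is to evaluate them at the special point supplied by B1: there exists $\theta_x\in\Theta$ with $T(\theta_x,x)<T(\theta_x,y)$ for all $y\neq x$, so that $x$ is the strict minimizer of $T(\theta_x,\cdot)$ and hence $x_{\theta_x}=x$. At this $\theta_x$ every $k$ satisfies $T(\theta_x,k)\geq T(\theta_x,x)$, so $\mathcal{A}_{\theta_x,x}=\mathcal{X}$ and, directly from the definitions, $k_1(\theta_x,x)=k_2(\theta_x,x)=x$. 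If both functions were constant in $\theta$ they would therefore equal $x$ for every $\theta$, and the decomposition \eqref{Aset2} would force $\mathcal{A}_{\theta,x}=\mathcal{X}$, i.e. $T(\theta,x)\leq T(\theta,k)$ for all $k\in\mathcal{X}$ and all $\theta\in\Theta$.

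I would then close the argument by applying B1 at a second point. Choosing any $x'\in\mathcal{X}$ with $x'\neq x$ (such an $x'$ exists whenever $\mathcal{X}$ is nondegenerate), B1 provides a $\theta_{x'}$ with $T(\theta_{x'},x')<T(\theta_{x'},x)$, whereas the minimality just derived gives $T(\theta_{x'},x)\leq T(\theta_{x'},x')$; these inequalities are incompatible. Hence $k_1(\cdot,x)$ and $k_2(\cdot,x)$ cannot both be constant, which is the claim. I expect the main obstacle to be this second assertion rather than the first: one has to notice that joint constancy forces the degenerate situation in which the observed $x$ minimizes $T(\theta,\cdot)$ simultaneously for every $\theta$, and that this is exactly what B1 forbids once it is invoked at a value other than $x$. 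The only points requiring care are the conventions for empty tails in the first part and the harmless assumption that $\mathcal{X}$ contains at least two points; the bookkeeping between $\geq$ and $<$ in the min/max definitions is routine.
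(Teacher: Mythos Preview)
Your proposal is correct and follows essentially the same approach as the paper. The decomposition of $\mathcal{A}_{\theta,x}$ via the monotonicity on each side of $x_\theta$ is identical, and for the non-constancy assertion both you and the paper invoke B1 at $x$ and at a second point $x'\neq x$; the only cosmetic difference is that you phrase it as a contradiction (constant $k_1,k_2$ would force $\mathcal{A}_{\theta,x}\equiv\mathcal{X}$), whereas the paper argues directly that $\mathcal{A}_{\theta,x}$ is not constant in $\theta$.
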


The proof of the proposition is given in Appendix~\ref{proofs}. 

When $x$ is fixed and $\theta$ is varying we will refer to $\lambda
(\theta,x)$ as the $p$-value function. We define the corresponding
confidence interval using the convex hull of $\{ \theta: \lambda(\theta
,x)>\alpha\}$ to ensure that it in fact is an interval; as we will see
in Section~\ref{strictly}, $\{ \theta: \lambda(\theta,x)>\alpha\}$
itself is not always connected. The interval in the following
definition is guaranteed to be nested: if $\alpha>\alpha_0$ the convex
hull of $\{\theta: \lambda(\theta,x)>\alpha\}$ is a subset of the
convex hull of $\{\theta: \lambda(\theta,x)>\alpha_0\}$.

\begin{definition}\label{def3}
The $1-\alpha$ confidence interval $I_\alpha(x)=(L_\alpha(x),U_\alpha
(x))$ corresponding to a test is
%
\begin{equation}
\label{intdef}
I_\alpha(x)=\bigl(\inf\bigl\{ \theta: \lambda(\theta,x)>
\alpha\bigr\}, \sup\bigl\{ \theta: \lambda(\theta,x)>\alpha\bigr\}\bigr).
\end{equation}
A confidence interval is said to be \emph{strictly two-sided} if it is
based on the inversion of a strictly two-sided test.
\end{definition}

\subsection{Examples of strictly two-sided tests}
We will focus on four commonly used strictly two-sided tests, which
satisfy conditions B1, B2 and B3 for some common discrete
distributions, including the binomial, Poisson and negative binomial
distributions. These tests are briefly described below. Further
details, as well as conditions for B1--B3 to hold, are given in Appendix~\ref{app1}.

\emph{The likelihood ratio test}, for which $T(\theta,x)$ is the
likelihood ratio statistic \cite{hirji06,sommerville13}.

\emph{The score test}, for which $T(\theta,x)$ is the score statistic
\cite{hirji06,sommerville13}.

\emph{The Sterne test}, for which $T(\theta,x)=1/\Prob_\theta(X=x)$
\cite{st3}.

\emph{The Blaker test}, which in fact is a class of tests. Given a
statistic $S(x)$, the Blaker statistic is $T(\theta,x)=1/\min\{\Prob
_\theta(S(X)\leq S(x)), \Prob_\theta(S(X)\geq S(x))\}$, was introduced
in Blaker \cite{bl1}. See also \cite{xie} for a interpretation based on
confidence curves. In the binomial, negative binomial and Poisson
settings, we will use the sufficient statistic $S(x)=x$, as is common.

In Section~\ref{nonnest}, we will discuss confidence intervals that
have varying tail-coverage but are based on minimization algorithms
rather than test inversion. Because these intervals do not fall under
Definition~\ref{def3} we will refer to them as being of strictly
two-sided-type rather than as being strictly two-sided.

\subsection{Lack of strict nestedness and its interpretation}\label{lackofstrict}
We will now show that strictly two-sided intervals lack strict
nestedness, and that this is caused by jumps in the $p$-value function
$\lambda(\theta,x)$, viewed as a function of $\theta$. 

\begin{proposition}\label{thm3}
Assume that $P_\theta\in\mathcal{P}(\Theta,\mathcal{X})$. 
Let $\lambda(\theta,x)$ be the $p$-value function of a strictly two-sided
test and let $I_\alpha(x)$ denote its corresponding strictly two-sided
confidence interval. Then for any $x\in\mathcal{X}$:
\begin{longlist}
\item[(a)] $\lambda(\theta,x)$ is not continuous in $\theta$,
%
\item[(b)] the bounds of $I_\alpha(x)$ are not strictly monotone in
$\alpha$,
\item[(c)] $I_\alpha(x)$ is not strictly nested.
\end{longlist}
\end{proposition}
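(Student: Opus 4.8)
The plan is to prove all three parts at once by studying the p-value function near its maximal value. The key structural observation is that a strictly two-sided p-value equals $1$ on an entire nondegenerate interval of $\theta$, and that it leaves this interval through a genuine downward jump at each end. Concretely, I would introduce the \emph{top flat} $F=\{\theta\in\Theta:\lambda(\theta,x)=1\}$. Since $\lambda(\theta,x)=\Prob_\theta(\mathcal{A}_{\theta,x})$ and every atom is positive by A1, we have $\lambda(\theta,x)=1$ if and only if $\mathcal{A}_{\theta,x}=\mathcal{X}$, i.e.\ if and only if $x$ minimizes $T(\theta,\cdot)$ over $\mathcal{X}$. Condition B1 supplies a point $\theta_x$ at which $x$ is the \emph{strict} minimizer, so $T(\theta_x,y)-T(\theta_x,x)>0$ for every $y\neq x$. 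Using the continuity in $\theta$ inherited from A3, these margins remain positive on a neighborhood of $\theta_x$, so $F$ contains an open interval; let $(\underline\theta,\overline\theta)\ni\theta_x$ denote its connected component. Because $P_\theta$ is stochastically increasing (A2), for fixed $x$ the p-value is bounded away from $1$ near the edges of $\Theta$, which forces $\inf\Theta<\underline\theta\le\overline\theta<\sup\Theta$; moreover $x$ can be the minimizer of $T(\theta,\cdot)$ on only one $\theta$-interval, so $\lambda(\theta,x)<1$ for every $\theta\notin[\underline\theta,\overline\theta]$.

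For part (a) I would examine the right endpoint $\overline\theta$. As $\theta$ increases past $\overline\theta$ at least one integer leaves $\mathcal{A}_{\theta,x}$ (its margin turns negative) while, by continuity, no integer enters, so $\lim_{\theta\downarrow\overline\theta}\lambda(\theta,x)=1-j_R$ with $j_R>0$ by A1, whereas $\lim_{\theta\uparrow\overline\theta}\lambda(\theta,x)=1$. This is a genuine discontinuity, and the symmetric argument applies at $\underline\theta$. Locating the jump at the boundary of $F$ is precisely what lets me sidestep the awkward possibility of atoms simultaneously entering and leaving $\mathcal{A}_{\theta,x}$ and cancelling, which would otherwise be the delicate point in establishing discontinuity.

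For parts (b) and (c) I would invert. Writing $s_R=\sup_{\theta>\overline\theta}\lambda(\theta,x)$ and $s_L=\sup_{\theta<\underline\theta}\lambda(\theta,x)$, the first paragraph gives $s_L,s_R<1$. For every $\alpha\in(\max(s_L,s_R),\,1)$ the superlevel set $\{\theta:\lambda(\theta,x)>\alpha\}$ contains the interior of $[\underline\theta,\overline\theta]$ and is contained in $[\underline\theta,\overline\theta]$, so $L_\alpha(x)=\underline\theta$ and $U_\alpha(x)=\overline\theta$. Each bound is thus constant in $\alpha$ over this range, which proves (b); and in fact $I_\alpha(x)=(\underline\theta,\overline\theta)$ is identically this interval there, so choosing any $\alpha_0<\alpha_1$ in the range yields $I_{\alpha_1}(x)=I_{\alpha_0}(x)$, which is not a proper subset of itself, proving (c). Note that (c) genuinely requires \emph{both} bounds to freeze simultaneously, and the top flat delivers exactly this.

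The main obstacle is not the inversion but pinning down the two structural facts about $F$ on which everything rests: that $F$ has nonempty interior, and that $\lambda(\cdot,x)$ equals $1$ on exactly one interval with $s_L,s_R<1$. Both are immediate for finite $\mathcal{X}$, but for an unbounded sample space such as the Poisson one I must rule out the margins $T(\theta,y)-T(\theta,x)$ accumulating at $0$ as $y\to\infty$ near $\theta_x$ (so that $F$ remains open), and I must use A2 to guarantee that atoms leaving $\mathcal{A}_{\theta,x}$ as $\theta$ moves outward never return (so that $\lambda$ cannot climb back to $1$, keeping $s_L,s_R<1$). I would discharge these points from A1--A3 together with the explicit form of $\mathcal{A}_{\theta,x}$ in Proposition \ref{B3prop}, which shows that the minimizer of $T(\theta,\cdot)$ moves monotonically in $\theta$ and that entry and exit of atoms is controlled by the monotone boundaries $k_1(\theta,x)$ and $k_2(\theta,x)$.
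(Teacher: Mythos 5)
Your core strategy---work with the top flat $F=\{\theta:\lambda(\theta,x)=1\}$, show that $\lambda(\cdot,x)$ jumps down from $1$ at the edges of $F$, and conclude that both bounds of $I_\alpha(x)$ freeze simultaneously for all $\alpha$ in an interval just below $1$---is essentially the paper's own proof of part (c): there $\Theta_1(x)$, $\alpha_\ell$ and $\alpha_u$ play exactly the roles of your $F$, $s_L$ and $s_R$. The one structural difference is part (a): the paper proves discontinuity without any analysis of the flat, via Lemma \ref{B4} together with Proposition \ref{B3prop} (some $k_i(\theta,x)$ is non-constant, and any change of $\mathcal{A}_{\theta,x}$ forces a jump by A1 and A3), and therefore only ever needs $\Theta_1(x)$ to be \emph{nonempty}, which is immediate from B1. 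Your proof of (a), by contrast, leans on the finer structure of $F$ that you then have to establish.

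That finer structure is where the genuine gaps lie. First, you repeatedly invoke continuity of $\theta\mapsto T(\theta,y)$ (``continuity in $\theta$ inherited from A3'', ``by continuity, no integer enters''), but A3 concerns $\Prob_\theta(X=x)$, not the test statistic; Definition \ref{def31} and B1--B3 nowhere assume $T(\cdot,y)$ is continuous in $\theta$. Hence ``$F$ contains an open interval'' and ``$x$ can be the minimizer of $T(\theta,\cdot)$ on only one $\theta$-interval'' are not consequences of the hypotheses of Proposition \ref{thm3} (they hold for the four concrete tests, but the proposition is stated for the whole class). Second, your plan to discharge the unbounded-$\mathcal{X}$ issues cites Proposition \ref{B3prop} as showing that ``the minimizer of $T(\theta,\cdot)$ moves monotonically in $\theta$'' with ``monotone boundaries $k_1(\theta,x)$ and $k_2(\theta,x)$''; Proposition \ref{B3prop} contains no monotonicity statement at all---monotonicity of $x_\theta$ is condition B3$^+$(iii), a hypothesis of Proposition \ref{smallprop}, not available here (similarly, A2 gives monotonicity of $\Prob_\theta(X\le x)$ in $\theta$, not that the flat stays away from the edges of $\Theta$; that would need A4-type limits). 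Third, and most consequentially for your inversion step: even granting that atoms never re-enter $\mathcal{A}_{\theta,x}$, that yields only $\lambda(\theta,x)<1$ \emph{pointwise} outside $[\underline\theta,\overline\theta]$, which is strictly weaker than $s_L,s_R<1$ (a supremum can equal $1$ without being attained), and $s_R<1$ is precisely what is needed for $U_\alpha(x)$ to freeze on $(\max(s_L,s_R),1)$. Your text asserts ``the first paragraph gives $s_L,s_R<1$'', but it does not. The paper avoids resting on the claims you cannot justify by quantifying the jump at the boundary points of $\Theta_1(x)$ through Lemma \ref{B4} and A1 (via the liminfs $\alpha_\ell,\alpha_u$); if you reroute your argument through that lemma, your blueprint becomes the paper's proof.
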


First, we show that $\lambda(\theta,x)$ has jumps. For any fixed $x\in
\mathcal{X}$, by Proposition~\ref{B3prop} we have, under B3,
%
\begin{equation}
\label{lambdaeq} \lambda(\theta,x)=\sum_{k\in\mathcal{A}_{\theta,x}}
\Prob_\theta (X=k)=\sum_{k\geq k_1(\theta,x) }
\Prob_\theta(X=k)+\sum_{k\leq
k_2(\theta,x) }
\Prob_\theta(X=k),
\end{equation}
%
where at least one of the $k_i(\theta,x)$ is non-constant in $\theta$.
$k_i(\theta,x)$ are integer-valued step-functions. Thus, for $\varepsilon
>0$ whenever $k_i(\theta,x)<k_i(\theta+\varepsilon,x)$, $k_i$ must have a
jump between $\theta$ and $\theta+\varepsilon$. This induces a jump in the
$p$-value function as well. To see this, assume without loss of
generality that $k_1(\theta+\varepsilon,x)=k_1(\theta,x)$ and $k_2(\theta
+\varepsilon,x)=k_2(\theta,x)+1$. Then
\[
\lambda(\theta+\varepsilon,x)=\sum_{k\geq k_1(\theta,x) }
\Prob_{\theta
+\varepsilon}(X=k)+\sum_{k\leq k_2(\theta,x) }
\Prob_{\theta+\varepsilon
}(X=k)+\Prob_{\theta+\varepsilon}\bigl(X=k_2(\theta,x)+1
\bigr),
\]
but by A1 and A3,
\[
\lim_{\varepsilon\searrow0}\lambda(\theta+\varepsilon,x)=\lambda(\theta ,x)+
\Prob_{\theta}\bigl(X=k_2(\theta,x)+1\bigr)>\lambda(\theta,x).
\]
Thus $\lambda(\theta+\varepsilon,x)\not\searrow\lambda(\theta,x)$ as
$\varepsilon\searrow0$ and the function is hence not continuous in $\theta
$. In particular, we have shown that $\lambda(\theta,x)$ has the
following property:

\begin{lemma}\label{B4}
Under the assumptions of Theorem~\ref{thm3}, $\lambda(\theta,x)$ as a
function of $\theta$ has a jump whenever a point is added to or removed
from $\mathcal{A}_{\theta,x}$.
\end{lemma}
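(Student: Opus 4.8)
The plan is to play the continuity of the individual point masses $\Prob_\theta(X=k)$ against the discreteness of the index set $\mathcal{A}_{\theta,x}$. Fix $x\in\mathcal{X}$. By Proposition \ref{B3prop}, $\mathcal{A}_{\theta,x}$ is governed by the two integer-valued threshold functions $k_1(\theta,x)$ and $k_2(\theta,x)$, each of which is a step function of $\theta$. Hence, viewed as a subset of $\mathcal{X}$, $\mathcal{A}_{\theta,x}$ is locally constant in $\theta$ away from the isolated values at which one of the thresholds jumps. On any open $\theta$-interval on which $\mathcal{A}_{\theta,x}$ equals a fixed set $\mathcal{A}$, the p-value is the finite sum $\sum_{k\in\mathcal{A}}\Prob_\theta(X=k)$, which is continuous (indeed differentiable) by A3. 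Therefore every discontinuity of $\lambda(\cdot,x)$ must sit at a threshold-jump point, that is, precisely where the membership of some integer in $\mathcal{A}_{\theta,x}$ changes.

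Next I would analyse such a point. Let $\theta_0$ be a value at which a single integer $m$ is added to (or removed from) $\mathcal{A}_{\theta,x}$. Then there is a one-sided neighbourhood of $\theta_0$ on which $m\in\mathcal{A}_{\theta,x}$, where the set equals a fixed $\mathcal{A}^+\ni m$, and an opposite one-sided neighbourhood on which $m\notin\mathcal{A}_{\theta,x}$, where the set equals $\mathcal{A}^-=\mathcal{A}^+\setminus\{m\}$. Taking one-sided limits and invoking the continuity of each $\Prob_\theta(X=k)$ from A3,
\begin{align*}
&\lim_{\theta\to\theta_0,\ m\in\mathcal{A}_{\theta,x}}\lambda(\theta,x)-\lim_{\theta\to\theta_0,\ m\notin\mathcal{A}_{\theta,x}}\lambda(\theta,x)\\
&\qquad=\sum_{k\in\mathcal{A}^+}\Prob_{\theta_0}(X=k)-\sum_{k\in\mathcal{A}^-}\Prob_{\theta_0}(X=k)=\Prob_{\theta_0}(X=m).
\end{align*}
By A1 the right-hand side is strictly positive, so the two one-sided limits of $\lambda(\cdot,x)$ at $\theta_0$ disagree and $\lambda(\cdot,x)$ has a jump there. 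The same argument applies verbatim to the upper tail and to removals, so all four cases (a point entering or leaving either tail) are covered by symmetry; in particular this recovers the special computation carried out just above for $k_2(\theta+\epsilon,x)=k_2(\theta,x)+1$.

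The main obstacle I anticipate is the possibility that \emph{several} integers change their membership of $\mathcal{A}_{\theta,x}$ at the very same $\theta_0$, with additions on one tail and removals on the other, so that the difference of one-sided limits becomes $\sum_{\text{added}}\Prob_{\theta_0}(X=k)-\sum_{\text{removed}}\Prob_{\theta_0}(X=k)$ and one must exclude exact cancellation. I would handle this by noting that the threshold functions change by finitely many units at isolated points, so it suffices to attribute to each individual entering point the positive contribution $\Prob_{\theta_0}(X=m)$ and to each leaving point the negative contribution $-\Prob_{\theta_0}(X=m)$; on a single tail only gains or only losses can occur as $\theta$ crosses $\theta_0$, so no cancellation is possible there. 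The only genuinely delicate configuration is a simultaneous gain on one tail and loss on the other, which forces $T(\theta_0,m)=T(\theta_0,m')=T(\theta_0,x)$ and is thus confined to an isolated exceptional set of $\theta_0$; there I would either invoke the asymmetry built into B2 to rule out the matching of the straddling point masses, or simply approach $\theta_0$ through nearby single-change points, which already yields jumps. Since parts (a)--(c) of Proposition \ref{thm3} only require the \emph{existence} of jumps, the single-point analysis is in any case enough for the downstream conclusions, and I would present the cancellation discussion last as the one place where the full strength of the defining conditions is needed.
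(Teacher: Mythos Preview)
Your argument is correct and essentially mirrors the paper's own proof: decompose $\lambda(\theta,x)$ via Proposition~\ref{B3prop}, use A3 for continuity of the individual terms on intervals where $k_1,k_2$ are constant, and use A1 to get a strictly positive jump $\Prob_{\theta_0}(X=m)$ when a single point $m$ enters or leaves $\mathcal{A}_{\theta,x}$. The paper handles only the representative case $k_2(\theta+\epsilon,x)=k_2(\theta,x)+1$ with $k_1$ fixed and dismisses the rest with ``without loss of generality''; your additional discussion of simultaneous gains and losses on opposite tails is more careful than the paper itself, though for the downstream applications (parts (b)--(c) of Proposition~\ref{thm3}) only the existence of jumps is needed, as you note.
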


Values of $\alpha$ for which $I_\alpha(x)$ is not strictly nested
correspond to the jumps in $\lambda(\theta,x)$. To see this, note that
if the interval $(\alpha_0,\alpha_1)\subseteq(0,1)$ is such that
%
\begin{equation}
\label{alpha0eq} \bigl\{\theta: \lambda(\theta,x)\in(\alpha_0,
\alpha_1)\bigr\}=\varnothing
\end{equation}
then for $\alpha\in(\alpha_0,\alpha_1)$, we have $\lambda(\theta
,x)>\alpha$ if and only if $\lambda(\theta,x)>\alpha_1$, which means
that the lower interval bound
\[
L_{\alpha}(x)=\inf\bigl\{\theta:\lambda(\theta,x)>\alpha\bigr\}=\inf\bigl
\{\theta :\lambda(\theta,x)>\alpha_1\bigr\}=L_{\alpha_1}(x)
\]
so that $L_{\alpha}(x)$ is not strictly monotone in $\alpha$. By
definition, the interval is not strictly nested if there exists an
$\alpha$ such that both $L_{\alpha}(x)$ and the upper interval bound
$U_{\alpha}(x)$ simultaneously are constant in a neighbourhood of
$\alpha$. The proof that there always exists such an $\alpha$ is
somewhat technical, and is deferred to Appendix~\ref{proofs}.


%
In particular, Proposition~\ref{thm3} holds when the test and its
corresponding confidence interval are exact. The proposition is
illustrated for exact tests and intervals in Figures~\ref{fig1}--\ref
{fig2}. In Figure~\ref{fig1}, $p$-values for the strictly two-sided \cite{st3,bl1},
likelihood ratio and score tests \cite{hirji06,sommerville13} are compared to the $p$-values of the
non-strictly two-sided test that corresponds to the fiducial interval
in the Poisson and binomial settings. It is readily verified that the
strictly two-sided tests satisfy B1--B3; see Appendix~\ref{app1}. In
Figure~\ref{fig2}, the interval bounds of some strictly two-sided
intervals are compared to the bounds of the fiducial interval. In the
Poisson case, the Sterne, Blaker, likelihood ratio, score,
Crow--Gardner \cite{crow56,casella89} and
Kabaila--Byrne  \cite{byrne01b} (the latter
two being of strictly two-sided-type) intervals are compared to the
Garwood interval. In the binomial case, the Sterne, Blaker, likelihood
ratio, score, Crow \cite{st2,bs1,ca1} (which is of strictly
two-sided-type) and G\"{o}b and  Lurz \cite{lurz} intervals are compared to the
Clopper--Pearson interval.

\begin{figure}

\includegraphics{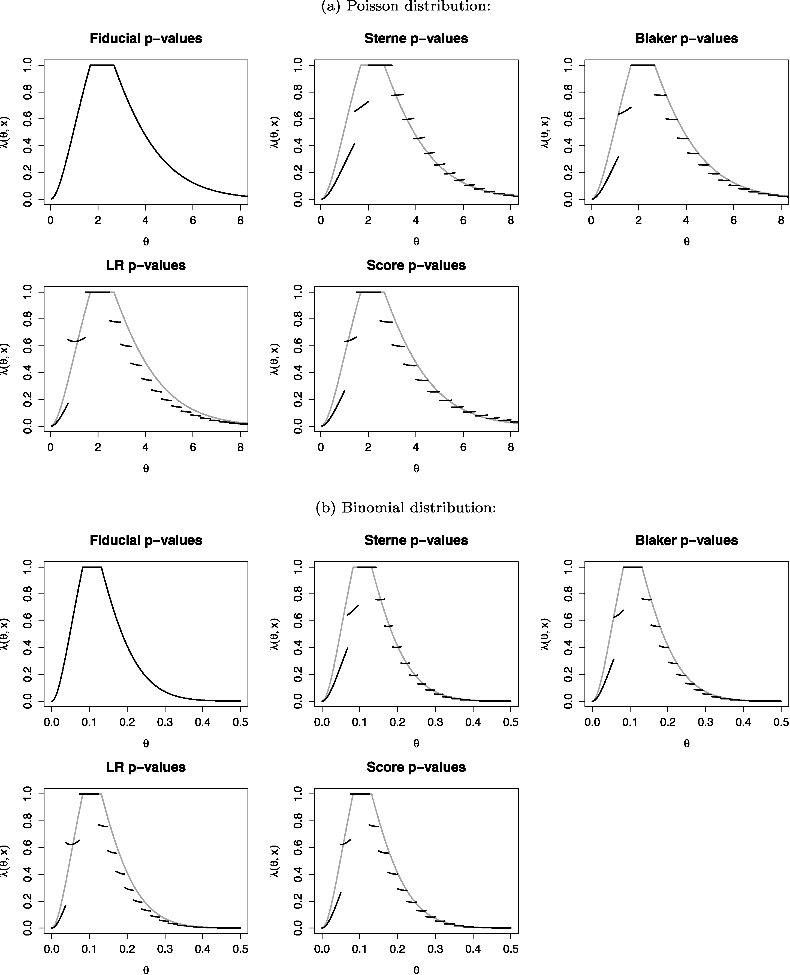}

\caption{Unlike the $p$-values for the fiducial test (shown in grey in
all plots), the strictly two-sided Sterne, Blaker, likelihood ratio
(LR) and score $p$-values are discontinuous and not bimonotone. In \textup{(a)},
the $p$-values are shown when $x=2$ is an observation from a Poisson
distribution with null mean $\theta$. In \textup{(b)}, the $p$-values are shown
when $x=2$ is an observation from a null $\operatorname{Bin}(20,\theta)$-distribution.}\label{fig1}
\end{figure}
\begin{figure}

\includegraphics{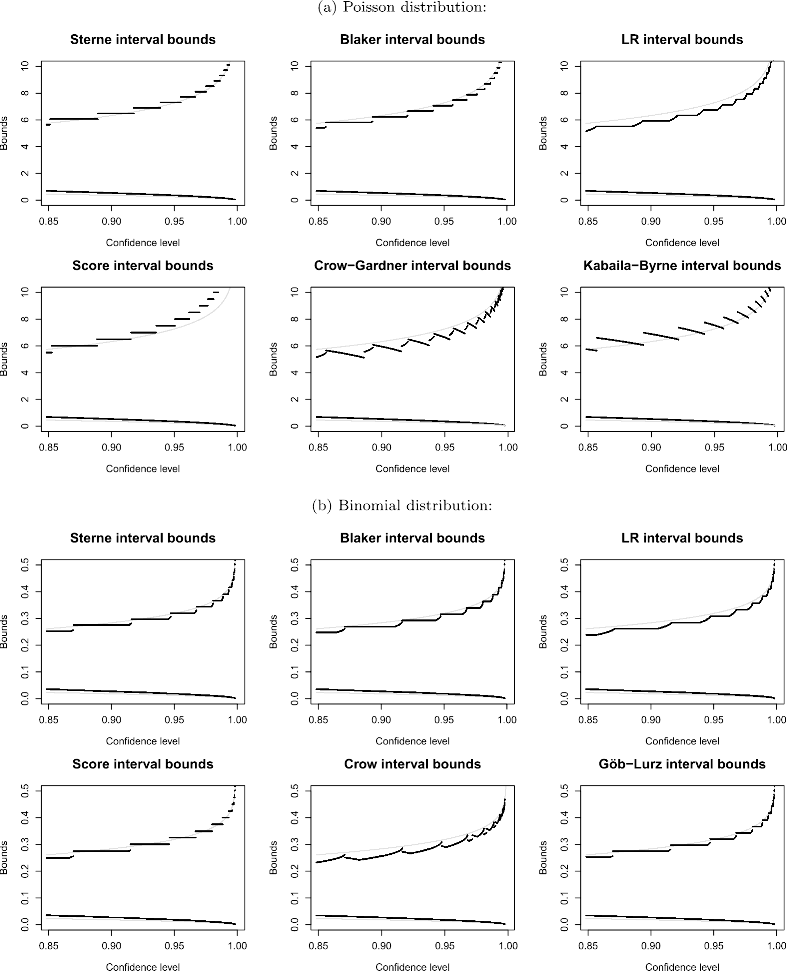}

\caption{Interval bounds of several strictly two-sided and strictly
two-sided-type confidence intervals. The intervals are compared to the
fiducial interval, the bounds of which are plotted in grey. In \textup{(a)}, the
intervals are shown when $x=2$ is an observation from a Poisson
distribution with mean $\theta$. In \textup{(b)}, the intervals are shown when
$x=2$ is an observation from a $\operatorname{Bin}(20,\theta)$-distribution.}\label{fig2}
\end{figure}

\subsection{The largest \texorpdfstring{$\alpha$}{alpha} for which an interval is strictly
nested}\label{smallest}
Proposition~\ref{thm3} tells us that strictly two-sided confidence
intervals lack strict nestedness and that their bounds are not strictly
monotone in $\alpha$. This may however not be a great problem if the
lack of strict nestedness and monotonicity occurs only for $\alpha$
close to 1.

Under some stronger assumptions on $T(\theta,x)$, $\mathcal{X}$ and
$P_\theta$ we can derive expressions for the largest $\alpha$ for which
$I_\alpha(x)$ is strictly nested and the largest $\alpha$ for which
each interval bound is strictly monotone. As we will see, these bounds
for $\alpha$ are usually close to 0, meaning that the lack of strict
nestedness and monotonicity occurs also for $\alpha$ that are used in practice.

We restrict our attention to samples spaces of the form $\mathcal{X}=\{
0,1,2,\ldots\}$ or $\mathcal{X}=\{0,1,2,\ldots, n\}$, for some known
$n<\infty$. Moreover, we will require some additional conditions, which
essentially make up stronger versions of A2 and B3:
\begin{enumerate}[A2$^+$.]
\item[A2$^+$.] $\Prob_\theta(X\leq x)$ is strictly decreasing in $\theta
$ for any $x\in\mathcal{X}\setminus\sup\mathcal{X}$.
\item[B3$^+$.]
\begin{enumerate}[(iii)]
\item[(i)] For any $\theta\in\Theta$, there exists $ x_\theta\in\mathcal
{X}$ such that $T(\theta,x)$ is strictly decreasing in $x$ when
$x<x_\theta$ and strictly increasing in $x$ when $x>x_\theta$.
\item[(ii)] For any $x\in\mathcal{X}$, there exists a $\theta_x\in\Theta
$ such that $\lambda(\theta_x,x)=1$ and $T(\theta,x)$ is strictly
decreasing in $\theta$ when $\theta<\theta_x$ and strictly increasing
in $\theta$ when $\theta>\theta_x$.
\item[(iii)] $x_\theta$ is an increasing function of $\theta$.
\end{enumerate}
\end{enumerate}

\begin{proposition}\label{smallprop}
Assume that $\mathcal{X}=\{0,1,2,\ldots\}$ or $\mathcal{X}=\{
0,1,2,\ldots,n\}$. Under \textup{A2}$^+$, \textup{B3}$^+$ and the assumptions of
Proposition~\ref{thm3} it holds that
\begin{longlist}[(a)]
\item[(a)] There exists an $\alpha_{\mathrm{nest}}>0$ such that $I_\alpha(x)$ is
strictly nested for all $x\in\mathcal{X}$ and $\alpha\leq\alpha_{\mathrm{nest}}$.
\item[(b)] Let $\alpha_L=\inf_{x\in\mathcal{X}}\inf_{\theta\in\{\theta:
T(\theta,0)>T(\theta,x)\}}\lambda(\theta,x)$. Then \textup{(i)} $\alpha_L>0$,
\textup{(ii)} for all $x>0$, $L_\alpha(x)$ is continuous and strictly increasing
in $\alpha$ when $\alpha\leq\alpha_L$, and \textup{(iii)} there exists an $x>0$
and an $\varepsilon>0$ such that $L_\alpha(x)$ is constant in $(\alpha
_L,\alpha_L+\varepsilon)$.
\item[(c)] For $\mathcal{X}=\{0,1,2,\ldots,n\}$, let $\alpha_U=\inf_{x\in\mathcal{X}}\sup_{\theta\in\{\theta: T(\theta,n)>T(\theta,x)\}
}\lambda(\theta,x)$. Then \textup{(i)} $\alpha_U>0$, \textup{(ii)} for all $x<n$,
$U_\alpha(x)$ is continuous and strictly decreasing in $\alpha$ when
$\alpha\leq\alpha_U$, and \textup{(iii)} there exists an $x>0$ and an $\varepsilon
>0$ such that $U_\alpha(x)$ is constant in $(\alpha_U,\alpha_U+\varepsilon)$.
\end{longlist}
\end{proposition}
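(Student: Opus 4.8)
The plan is to determine, for each fixed $x$, the exact shape of the p-value function $\theta\mapsto\lambda(\theta,x)$ and to read off the behaviour of the bounds $L_\alpha(x)$ and $U_\alpha(x)$ by inversion. By B3$^+$(i)--(ii) the set $\mathcal{A}_{\theta,x}$ is the union of a left tail and a right tail (Proposition \ref{B3prop}), $\lambda(\theta_x,x)=1$, and $\lambda(\theta,x)$ is piecewise strictly increasing on the lower branch $\theta<\theta_x$ and piecewise strictly decreasing on the upper branch $\theta>\theta_x$. On any maximal $\theta$-interval on which $\mathcal{A}_{\theta,x}$ is constant, $\lambda(\theta,x)$ is a finite sum $\sum_{k\in\mathcal{A}_{\theta,x}}\Prob_\theta(X=k)$, hence continuous by A3 and strictly monotone by A2$^+$; at the endpoints it jumps, by Lemma \ref{B4}. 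Inverting, $L_\alpha(x)$ is continuous and strictly increasing in $\alpha$ precisely for those $\alpha$ lying in the (open) range of the continuous pieces of the lower branch, and is constant in $\alpha$ exactly on the gaps $\bigl(\lim_{\vartheta\nearrow\theta^{*}}\lambda(\vartheta,x),\,\lim_{\vartheta\searrow\theta^{*}}\lambda(\vartheta,x)\bigr)$ created by the lower-branch jumps $\theta^{*}$. The liminf convention in Definition \ref{def31} assigns $\lambda$ its pre-jump value at $\theta^{*}$, which is what makes each plateau half-open and closed on the left.

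For part (b) I would first locate the lowest lower-branch jump. Since, by B3$^+$(i),(iii), the left tail of $\mathcal{A}_{\theta,x}$ grows inward starting from the extreme point $0$ as $\theta$ increases, the first point to enter is $0$, at $\theta_0(x):=\inf\{\theta:T(\theta,0)\ge T(\theta,x)\}$; for $\theta<\theta_0(x)$ one has $\mathcal{A}_{\theta,x}=\{k\ge x\}$, so $\lambda(\theta,x)=\Prob_\theta(X\ge x)$. Writing $\beta(x):=\lim_{\vartheta\nearrow\theta_0(x)}\Prob_\vartheta(X\ge x)$ for this pre-jump value and setting $\alpha_L=\inf_x\beta(x)$, the first paragraph yields (ii): for $\alpha\le\alpha_L$ the level set defining $L_\alpha(x)$ meets the initial smooth piece $\lambda(\cdot,x)=\Prob_{\cdot}(X\ge x)$, which is continuous and, by A2$^+$, strictly increasing, so $L_\alpha(x)$ inherits these properties. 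For (iii), Lemma \ref{B4} gives a jump of size $\Prob_{\theta_0(x)}(X=0)$, which is strictly positive by A1; hence $L_\alpha(x)=\theta_0(x)$ for every $\alpha$ in the non-empty interval $\bigl(\beta(x),\beta(x)+\Prob_{\theta_0(x)}(X=0)\bigr)$, and taking $x$ to (nearly) attain $\inf_x\beta(x)$ produces the required plateau on some $(\alpha_L,\alpha_L+\epsilon)$. Part (c) is the mirror image obtained by reflecting the sample space: on the upper branch the right tail grows inward from the extreme point $n$, and replacing $\Prob_\theta(X\ge x)$ by $\Prob_\theta(X\le x)$, strictly increasing by strictly decreasing, and $\inf$ by $\sup$ throughout gives the statements for $U_\alpha(x)$ and $\alpha_U$.

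Part (a) then follows from (b) alone, with $\alpha_{nest}:=\alpha_L$. Indeed, the intervals are nested by construction (Definition \ref{def3}), so for $0<\alpha_0<\alpha\le\alpha_L$ we have $U_\alpha(x)\le U_{\alpha_0}(x)$, while (b)(ii) gives $L_{\alpha_0}(x)<L_\alpha(x)$ strictly; hence $I_\alpha(x)\subsetneq I_{\alpha_0}(x)$, i.e.\ $I_\alpha(x)$ is strictly nested. (Strict monotonicity of a single bound already forces proper containment, so (c) is not needed here.)

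The step I expect to be the main obstacle is the strict positivity in (i), i.e.\ that $\alpha_L=\inf_x\beta(x)>0$ (and likewise $\alpha_U>0$). For a finite sample space $\mathcal{X}=\{0,\dots,n\}$ this is immediate, since the infimum is then a minimum of finitely many quantities, each positive by A1. The genuinely delicate case is the unbounded sample space $\mathcal{X}=\{0,1,2,\dots\}$ of part (b): here one must show that the pre-jump tail probabilities $\beta(x)=\lim_{\vartheta\nearrow\theta_0(x)}\Prob_\vartheta(X\ge x)$ do not tend to $0$ as $x\to\infty$, which is exactly where the full force of A2$^+$, B3$^+$ and the location of $\theta_0(x)$ must be brought to bear; controlling this infimum uniformly in $x$, together with matching the liminf convention so that the relevant threshold is $\beta(x)$ rather than the post-jump value $\beta(x)+\Prob_{\theta_0(x)}(X=0)$, is the crux of the argument.
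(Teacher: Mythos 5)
Your route through (b)(ii), (b)(iii), (c) and (a) is essentially the paper's own: the paper likewise isolates the first lower-branch piece $\lambda(\theta,x)=\Prob_\theta(X\geq x)$ on $(\inf\Theta,\theta_{0,x})$ (via an auxiliary lemma ordering the entry points $\theta_{0,x}\leq\theta_{1,x}\leq\cdots$ and identifying $\mathcal{A}_{\theta,x}$ on each piece), inverts this continuous strictly increasing piece for (ii), gets the plateau in (iii) from the jump of size $\Prob_{\theta_{0,x}}(X=0)$ together with the bound $\Prob_\theta(X\leq y)+\Prob_\theta(X\geq x)>\Prob_{\theta_{0,x}}(X\geq x)$, treats (c) as the mirror image, and deduces (a) from monotonicity of a single bound. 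Two local errors, though. First, your claim that $\lambda(\cdot,x)$ is strictly monotone on every maximal interval where $\mathcal{A}_{\theta,x}$ is constant is false: a two-tailed piece $\Prob_\theta(X\leq k_2)+\Prob_\theta(X\geq k_1)$ is the sum of a strictly decreasing and a strictly increasing function, and its possible interior minimum is precisely the source of the non-bimonotonicity studied in Section \ref{strictly}. This is harmless for part (b), which only uses the one-tailed first piece, but it invalidates your ``precisely/exactly'' characterization of where $L_\alpha$ moves. Second, (a) does not follow from (b) alone: for $x=0$ we have $\Theta_\ell(0)=\emptyset$, so $L_\alpha(0)$ is constant in $\alpha$ for every $\alpha$, and proper containment of $I_\alpha(0)$ in $I_{\alpha_0}(0)$ must come from the upper bound, i.e.\ from (c); your parenthetical dismissing (c) is wrong (the paper's proof of (a) glosses over the same point).

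The decisive issue is the one you flag but do not resolve: positivity of $\alpha_L=\inf_{x}\alpha_L(x)$, and attainment of that infimum, which your plateau argument for (iii) genuinely needs (the plateau for $x$ starts at $\beta(x)$, so ``nearly attaining'' the infimum does not suffice). This is a real gap in your proposal — but you should know the paper does not close it either: its proof of (i) only yields $\alpha_L(x)=\Prob_{\theta_{0,x}}(X\geq x)>0$ for each \emph{fixed} $x$ (via A1), and uniformity in $x$ rests on the bare assertion that $\alpha_L=\alpha_L(x)$ ``with equality for some $x$'', which is trivial for finite $\mathcal{X}$ but unproven for $\mathcal{X}=\{0,1,2,\ldots\}$. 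In fact your pessimism is warranted, because the uniform statement can fail: for the Sterne test in the Poisson model, $\theta_{0,x}=(x!)^{1/x}\sim x/e$, and a Chernoff bound plus Stirling's formula give $\alpha_L(x)=\Prob_{\theta_{0,x}}(X\geq x)\leq e^{-\theta_{0,x}}\,e^{x}x!/x^{x}=O\bigl(\sqrt{x}\,e^{-x/e}\bigr)\rightarrow 0$, so that $\alpha_L=0$ and no positive uniform threshold exists. Thus the obstacle you identify is not a missing trick: parts (a) and (b)(i) are only salvageable for finite $\mathcal{X}$, or with $x$-dependent thresholds $\alpha_L(x)$, $\alpha_{nest}(x)$ as in Section \ref{smallest}; your finite-case argument (a minimum of finitely many quantities, each positive by A1) is the correct and essentially complete one in that setting.
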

%

\begin{figure}

\includegraphics{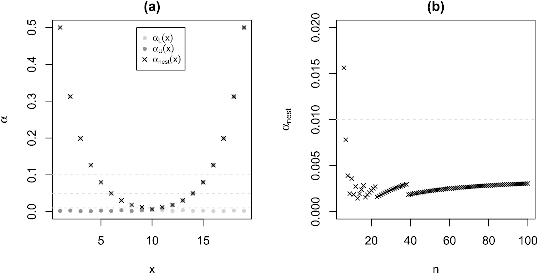}

\caption{\textup{(a)} The largest $\alpha$ for which the lower and upper bounds
of the Blaker interval for a binomial proportion are strictly monotone
($\alpha_L(x)$ and $\alpha_U(x)$), and the largest $\alpha$ for which
the interval is nested conditioned on $x$ ($\alpha_{\mathrm{nest}}(x)$), when
$n=20$. The common choices $\alpha\in\{0.01,0.05,0.1\}$ are shown as
dashed lines. \textup{(b)} $\alpha_{\mathrm{nest}}$, the largest $\alpha$ for which the
Blaker interval for a binomial proportion is strictly nested, as a~function of $n$.}\label{fig3}
\end{figure}

Proposition~\ref{smallprop} deals with $\alpha$ guaranteeing strict
monotonicity and nestedness for all $x$. We can also study monotonicity
and nestedness for fixed $x$. For any $x\in\mathcal{X}$, let $\alpha
_L(x)$ denote the largest $\alpha$ for which $L_\alpha(x)$ is strictly
monotone, and $\alpha_U(x)$ denote the largest $\alpha$ for which
$U_\alpha(x)$ is strictly monotone. Finally, let $\alpha_{\mathrm{nest}}(x)$ be
the largest $\alpha$ for which $I_\alpha(x)$ is strictly nested. In
Figure~\ref{fig3}(a), these quantities are shown for the Blaker
interval for a binomial proportion, with $n=20$ and $x\in\{1,2,\ldots
,19\}$. In this example, $\alpha_{\mathrm{nest}}(x)<0.1$ for most $x$. As is
seen, $\alpha_{\mathrm{nest}}(x)$ is often equal to or very close to $\max(\alpha
_L(x),\alpha_U(x))$. Figures for other intervals, other $n$ and other
distributions are similar.

Figure~\ref{fig3}(b) shows $\alpha_{\mathrm{nest}}$ for the binomial Blaker
interval as a function of the sample size $n$. It is seen that when
$7\leq n\leq100$ we have $\alpha_{\mathrm{nest}}<0.01$ for the Blaker interval,
meaning that the interval lacks strict nestedness for virtually all
values of $\alpha$ that actually are used in practice for these sample sizes.

\subsection{Confidence intervals not based on test-inversion}\label{nonnest}
An interesting class of confidence intervals are based on minimization
algorithms. This class includes \cite{st2,crow56,bs1,ca1,casella89,byrne01b} and \cite{schilling14} intervals. For such intervals, the shortest interval is
determined for each $\alpha$. What typically occurs for these intervals
is that they correspond to inversion of different tests for different
$\alpha$. Often this will result in intervals that lack nestedness (and
not only strict nestedness), as it leads to some values of $\theta$
having multiple $p$-values attached to them. This can be seen in Figure~\ref{fig2}: neither the Crow interval for the binomial parameter nor
the Crow--Gardner and Kabaila--Byrne intervals for the Poisson parameter
are nested.


If a two-sided $1-\alpha$ interval is $(\theta_\ell,\theta_u)$, then
the $p$-values for the corresponding two-sided tests of the hypotheses
$\theta_0=\theta_\ell$ and $\theta_0=\theta_u$ are $\alpha$. Using this
relationship, we can plot the $p$-value functions of tests corresponding
to intervals that are not defined in terms of test inversion, such as
minimization-based intervals. The lack of nestedness means that the
$p$-value function $\lambda(\theta,x)$ of the corresponding test is not a
proper function for $x\in\mathcal{X}$ fixed, since some values of
$\theta$ are mapped to more than one $p$-value. For some intervals, this
problem becomes extreme. Two examples of this are the Kabaila--Byrne
and Crow--Gardner intervals for a Poisson mean, shown in Figure~\ref{fig4}. For other intervals, the lack of nestedness results in less
extreme $p$-value functions. An example of this is the Schilling--Doi
interval for a binomial proportion; in Figure~\ref{fig4} the jumps in
its $p$-value function are shown as vertical lines, in order to make the
consequences of the non-nestedness easier to spot.

\begin{figure}

\includegraphics{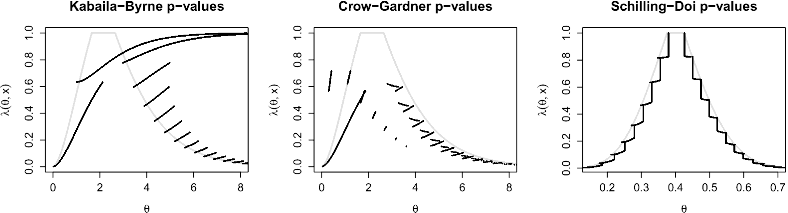}

\caption{Comparison between $p$-values corresponding to fiducial
intervals (grey) and $p$-values corresponding to some minimization-based
intervals (black). The $p$-values of the tests corresponding to the
Kabaila--Byrne and Crow--Gardner intervals are shown for $x=2$ being an
observation from a Poisson distribution with null mean $\theta$, and
the $p$-values of the test corresponding to the Schilling--Doi interval
are shown for $x=8$ being an observation from a null $\operatorname{Bin}(20,\theta
)$-distribution.}\label{fig4}
\end{figure}

\section{Continuity and bimonotonicity}\label{strictly}

For $\Theta\subseteq\mathbb{R}$, we say that a function $f:\Theta\to
\mathbb{R}$ is strictly bimonotone on $\Theta$ if there exist $\theta
_0,\theta_1\in\Theta$ such that $f$ is strictly increasing on $(\inf
\Theta,\theta_0)$, constant on $(\theta_0,\theta_1)$ and strictly
decreasing on $(\theta_1,\sup\Theta)$.

As have been argued for example, by  Hirji \cite{hirji06} and Vos and Hudson \cite{vh1},
this type of bimonotonicity is a highly desirable property of $p$-values
when viewed as a function of $\theta$. Ideally $\lambda(\theta,x)$
should increase monotonically from 0 to 1 and then decreases
monotonically to 0, just like the $p$-values of the tests corresponding
to fiducial intervals do in Figure~\ref{fig1}. One reason that this
property is desirable is the following result.

\begin{proposition}\label{propbim}
The bounds of a confidence interval are discontinuous in $\alpha$ if
their corresponding $p$-value function is not strictly bimonotone in
$\theta$.
\end{proposition}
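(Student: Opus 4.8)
The plan is to prove the statement directly, producing in each case an explicit jump of $L_\alpha(x)$ or $U_\alpha(x)$; I read ``the bounds are discontinuous'' as ``at least one of $L_\alpha(x),U_\alpha(x)$ is discontinuous''. The two standing tools are, first, the monotonicity recorded before Definition~\ref{def3}: $L_\alpha(x)=\inf\{\theta:\lambda(\theta,x)>\alpha\}$ is non-decreasing in $\alpha$ and $U_\alpha(x)=\sup\{\theta:\lambda(\theta,x)>\alpha\}$ is non-increasing, so that a genuine leap of a one-sided limit is automatically a discontinuity. Second, by condition~B1 and the definition of the p-value, $\lambda(\cdot,x)$ attains its maximal value $1$ at any $\theta$ where $x$ minimises $T(\theta,\cdot)$, and in the monotone-family setting the set $\{\theta:\lambda(\theta,x)=1\}$ is an interval (this is where the monotonicity of $x_\theta$, as in B3$^+$(iii), enters). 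This connectedness of the ``top'' is used only to rule out one degenerate configuration in the last step.

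The first step is a shape characterisation: $\lambda(\cdot,x)$ is strictly bimonotone in the sense opening Section~\ref{strictly} if and only if (i) every strict super-level set $\{\theta:\lambda(\theta,x)>\alpha\}$ is an interval, and (ii) $\lambda(\cdot,x)$ is constant on no subinterval of $\Theta$ on which its value is strictly below $1$. Indeed, (i) forces the weak profile ``non-decreasing then non-increasing'', and (ii) upgrades the two stretches to strictly monotone while confining all flatness to the top plateau; conversely the increasing--constant--decreasing form gives both (i) and (ii), even in the presence of jumps. Negating this, failure of strict bimonotonicity means that either (i) fails, producing a \emph{dip} $\theta'<\theta''<\theta'''$ with $\lambda(\theta'',x)<\min(\lambda(\theta',x),\lambda(\theta''',x))$, or else (i) holds but (ii) fails, producing an interval of constancy at some level $c<1$ lying on the increasing or the decreasing flank.

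In the flat case I argue directly. Suppose $\lambda(\cdot,x)\equiv c<1$ on $(a,b)$ with the plateau on the increasing flank, where by (i) the flank is non-decreasing, so $\lambda(\cdot,x)$ rises to $c$ exactly at $a$ and exceeds $c$ only beyond $b$. For $\alpha\uparrow c$ the unique crossing of level $\alpha$ on this non-decreasing flank tends to $a$, so $L_\alpha(x)\to a$; but at $\alpha=c$ the strict inequality in $\{\lambda>\alpha\}$ expels the whole plateau, forcing $L_c(x)\ge b$. Since $a<b$ and $L_\alpha(x)$ is non-decreasing, $L_\alpha(x)$ has an upward jump at $\alpha=c$. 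A plateau below $1$ on the decreasing flank yields, symmetrically, a jump of $U_\alpha(x)$.

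In the dip case, set $P_L=\sup_{\theta\le\theta''}\lambda(\theta,x)$ and $P_R=\sup_{\theta\ge\theta''}\lambda(\theta,x)$, both exceeding $\lambda(\theta'',x)$. Because the value $1$ is attained only on a connected top region, $\lambda(\theta'',x)<1$ cannot be flanked by two isolated global maxima, so $P_L\ne P_R$; take $P_L<P_R$ (the case $P_R<P_L$ is symmetric, giving a jump of $U_\alpha(x)$). For $\alpha<P_L$ the super-level set meets $\{\theta\le\theta''\}$, whence $L_\alpha(x)\le\theta''$, so $\lim_{\alpha\uparrow P_L}L_\alpha(x)\le\theta''$. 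For $\alpha>P_L$ it is contained in $\{\theta>\theta''\}$, and since $P_R>P_L$ it is non-empty with infimum tending to $q:=\inf\{\theta>\theta'':\lambda(\theta,x)>P_L\}>\theta''$ as $\alpha\downarrow P_L$. Thus $L_\alpha(x)$ jumps at $\alpha=P_L$. The main obstacle is exactly this dip case: I must pin down the jump level and confirm $q>\theta''$ when the suprema $P_L,P_R$ are unattained or coincide with jumps of $\lambda(\cdot,x)$ (here Lemma~\ref{B4} and the $\liminf$ convention of Definition~\ref{def31} fix the one-sided values), and I must secure the connectedness of the top region in order to exclude the degenerate configuration of two separated global maxima straddling a dip, in which the bounds would in fact remain continuous. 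Once these points are settled, every failure of strict bimonotonicity produces a discontinuity of $L_\alpha(x)$ or $U_\alpha(x)$, which is the assertion.
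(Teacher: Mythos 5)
Your plateau case is sound, and it is in substance the paper's entire proof. The paper makes one assumption that covers both of your cases at once: without loss of generality there exist $\theta_0<\theta_1<\inf\{\theta:\lambda(\theta,x)=1\}$ such that $\lambda(\cdot,x)$ is increasing on $(\inf\Theta,\theta_0)$ and \emph{decreasing or constant} on $(\theta_0,\theta_1)$. Setting $\alpha_0=\lambda(\theta_0,x)$, one has $\theta_1=\inf\{\theta>\theta_0:\lambda(\theta,x)>\alpha_0\}$, so the left-hand limit of $L_\alpha(x)$ at $\alpha_0$ is at most $\theta_0$ while $L_\alpha(x)\geq\theta_1$ for $\alpha>\alpha_0$: a jump of length at least $\theta_1-\theta_0$, with the symmetric argument for $U_\alpha(x)$. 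Because ``decreasing or constant'' treats dip and plateau uniformly, your dichotomy --- and with it the whole $P_L,P_R$ apparatus --- is unnecessary: any failure of strict bimonotonicity occurring below the top set produces a local maximum level $\alpha_0<1$ at some $\theta_0$ with $\inf\{\theta>\theta_0:\lambda(\theta,x)>\alpha_0\}>\theta_0$, and the monotonicity of $L_\alpha(x)$ in $\alpha$ does the rest.

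The genuine gap is your dip case, which you leave unfinished and which cannot be finished as set up. You invoke connectedness of $\{\theta:\lambda(\theta,x)=1\}$ to force $P_L\neq P_R$, but this is not among the hypotheses --- Proposition \ref{propbim} is stated for an arbitrary confidence interval and its p-value function, and even B1--B3 do not obviously yield a connected top set --- and you concede you have not proved it. Nor would it suffice: connectedness alone does not exclude $P_L=P_R=1$ with the supremum unattained on one side of $\theta''$. More fundamentally, the degenerate configuration you identify (two peaks at the maximal level straddling a dip) is one in which, as you yourself observe, both bounds \emph{are} continuous; so this case cannot be ``settled'' by further argument --- it must be excluded by how the statement is read, namely that the failure of bimonotonicity occurs on a flank, strictly below $\inf\{\theta:\lambda(\theta,x)=1\}$ or above the supremum. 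That is precisely what the paper's WLOG does, and legitimately so, since the interval of Definition \ref{def3} is a convex hull and dips interior to the hull of the top set never move the bounds. Your step $q>\theta''$ has a related soft spot: for a single-point dip the super-level sets lose only the point $\theta''$, the convex hull and hence both bounds are unchanged, and no jump occurs; this is harmless for the piecewise-smooth p-value functions of the paper (A3 and Lemma \ref{B4}), but your argument is purely topological and never uses that structure. In short: the plateau argument matches the paper; the dip analysis is incomplete at exactly the points you flag, and is in any event avoidable, since the paper's single reduction collapses both cases into a three-line proof.
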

\begin{pf}
Assume without loss of generality that there exist $\theta_0<\theta
_1<\inf\{\theta: \lambda(\theta,x)=1\}$ such that $\lambda(\theta,x)$
is increasing in $\theta$ in the interval $(\inf\Theta,\theta_0)$ and
decreasing or constant in the interval $(\theta_0,\theta_1)$. Let
$\alpha_0=\lambda(\theta_0,x)$. Then $\theta_1=\inf\{\theta>\theta_0:
\lambda(\theta,x)>\alpha_0\}$. Thus $L_{\alpha_0}(x)=\theta_0$ but for
all $\varepsilon>0$, $L_{\alpha+\varepsilon}(x)\geq\theta_1$, meaning that
$L_\alpha(x)$ has a jump of length $\theta_1-\theta_0>0$ at $\alpha
=\alpha_0$. An analogous argument holds for the upper bound.
\end{pf}

Hirji \cite{hirji06} mentions that $p$-value functions of strictly two-sided
tests need not be bimonotone, whereas Vos and Hudson \cite{vh1} showed by example
that the Blaker test for a binomial proportion lacks bimonotonicity.
Upon closer inspection of Figures~\ref{fig1} and \ref{fig2}, it can be
seen that all the strictly two-sided tests considered here suffer from
this problem.

Next, we give a condition under which the $p$-value function of a
strictly two-sided test is strictly bimonotone for fixed $x$, the
derivation of which is given in Appendix~\ref{proofs}. The
bimonotonicity condition requires the following additional assumptions,
which are satisfied by the binomial, negative binomial and Poisson
distributions.
\begin{longlist}[A4.]
\item[A4.] For\vspace*{1pt} $x\in\mathcal{X}\setminus\sup\mathcal{X}$, $\lim_{\theta
\rightarrow\inf\Theta}\Prob_\theta(X\leq x)=1$ and $\lim_{\theta
\rightarrow\sup\Theta}\Prob_\theta(X\leq x)=0$.
\item[A5.] For $k_1,k_2\in\mathcal{X}$ such that $k_1\geq k_2+2$, $\sum_{k\geq k_1 }\Prob_\theta(X=k)+\sum_{k\leq k_2 }\Prob_\theta(X=k)$ has
a unique minimum in the interior of $\Theta$.
\end{longlist}


\begin{proposition}\label{proposition1}
Under the assumptions and notation of Proposition~\ref{thm3}, assume
that $P_\theta$ satisfies conditions \textup{A4} and \textup{A5}. Let $\theta_r(\theta
_0,x)$ be the solution to
%
\begin{equation}
\label{Heq} \sum_{k=k_2(\theta_0,x)+1}^{k_1(\theta_0,x)-1}
\frac{d}{d\theta}\Prob _\theta(X=k)=0
\end{equation}
in the interior of $\Theta$. Then
\begin{longlist}[(a)]
\item[(a)] $\lambda(\theta,x)$ is strictly bimonotone in $\theta$ for
any fixed $x\in\mathcal{X}\setminus\sup\mathcal{X}$.
\item[(b)] The bounds of $I_\alpha(x)$ are continuous in $\alpha$,
\end{longlist}
if and only if there does not exist $(\theta_0,x)$ such that either
%
\begin{eqnarray}
&&\theta_0<\inf\bigl\{\theta: \lambda(\theta,x)=1\bigr\}\quad
\mbox{and}\quad \theta _0<\theta_r(\theta_0,x),\quad
\mbox{or}
\nonumber
\\[-8pt]
\label{bimcond}
\\[-8pt]
\nonumber
&&\theta_0>\sup\bigl\{\theta: \lambda(\theta,x)=1\bigr\}
\quad\mbox{and}\quad \theta _0>\theta_r(\theta_0,x).
\end{eqnarray}
\end{proposition}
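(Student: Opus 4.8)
The plan is to analyze $\lambda(\theta,x)$, for fixed $x$, as a piecewise-smooth function whose pieces are joined by the jumps identified in Lemma \ref{B4}, and to show that bimonotonicity can fail only within a smooth piece. Using \eqref{lambdaeq} I would write $\lambda(\theta,x)=1-C(\theta,x)$ with central mass $C(\theta,x)=\sum_{k=k_2(\theta,x)+1}^{k_1(\theta,x)-1}\Prob_\theta(X=k)$. On any maximal interval of $\theta$ on which $k_1(\theta,x)$ and $k_2(\theta,x)$ are constant, $\lambda$ coincides with the smooth function $g_{k_1,k_2}(\theta)=\sum_{k\geq k_1}\Prob_\theta(X=k)+\sum_{k\leq k_2}\Prob_\theta(X=k)$, which is differentiable by A3 and, whenever $k_1\geq k_2+2$, has a unique interior minimum by A5. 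Since $\sum_{k\in\mathcal{X}}\frac{d}{d\theta}\Prob_\theta(X=k)=0$, the derivative of $g_{k_1,k_2}$ equals $-\sum_{k=k_2+1}^{k_1-1}\frac{d}{d\theta}\Prob_\theta(X=k)$, which vanishes precisely at $\theta_r(\theta_0,x)$ defined by \eqref{Heq}. Hence on each piece $\lambda$ is strictly decreasing for $\theta<\theta_r$ and strictly increasing for $\theta>\theta_r$. Using A4 I would record that $\lambda\to 0$ as $\theta\to\inf\Theta$ and as $\theta\to\sup\Theta$, and denote by $\theta_-=\inf\{\theta:\lambda(\theta,x)=1\}$ and $\theta_+=\sup\{\theta:\lambda(\theta,x)=1\}$ the endpoints of the region where $\lambda=1$.

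The key lemma, and the step I expect to be the main obstacle, is to control the direction of the jumps. I would show that as $\theta$ increases towards $\theta_-$ from the left, points are only \emph{added} to $\mathcal{A}_{\theta,x}$, so that every jump of $\lambda$ on $(\inf\Theta,\theta_-)$ is upward, and symmetrically that every jump on $(\theta_+,\sup\Theta)$ is downward. Concretely, for $\theta$ to the left of the plateau one has $x>x_\theta$, so by B3 and \eqref{Aset2} the upper index $k_1(\theta,x)=x$ is frozen and only $k_2(\theta,x)$ varies; the fixed upper-tail term $\Prob_\theta(X\geq x)$ is strictly increasing in $\theta$ by A2, and it remains to argue that $k_2(\theta,x)$ is non-decreasing in $\theta$, so that each jump adds the point $k_2+1$ to the lower tail and raises $\lambda$. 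The delicate part is that only A2--A5 and B1--B3 are available (not the stronger A2$^+$ and B3$^+$), so the monotone evolution of the boundary $k_2(\theta,x)$ must be extracted from the stochastic ordering in A2 together with the unimodal-in-$x$ shape of $T$ guaranteed by B3; the right tail is then handled symmetrically with $k_2$ frozen and $k_1$ moving.

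With the jumps shown to be benign, assembling the equivalence in part (a) is routine. For sufficiency, if no $(\theta_0,x)$ satisfies \eqref{bimcond} then every $\theta_0<\theta_-$ has $\theta_0\geq\theta_r(\theta_0,x)$, so $\lambda$ lies on the increasing branch of each left-hand piece and, together with the upward jumps, is strictly increasing on $(\inf\Theta,\theta_-)$; the symmetric argument gives strict decrease on $(\theta_+,\sup\Theta)$, so $\lambda$ is strictly bimonotone. For necessity I argue by contraposition: if the first alternative in \eqref{bimcond} holds for some $(\theta_0,x)$, then $\theta_0<\theta_r(\theta_0,x)$ places $\theta_0$ on the strictly decreasing branch of its piece, while $\theta_0<\theta_-$ forces this to occur to the left of the plateau, directly contradicting strict increase there; the second alternative is symmetric.

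Finally, for part (b) I would combine the two directions through Proposition \ref{propbim}. That proposition already gives that failure of strict bimonotonicity implies discontinuous bounds, so continuity of the bounds implies (a). For the converse I show directly that strict bimonotonicity implies continuous bounds: since $L_\alpha(x)$ and $U_\alpha(x)$ are the generalized inverses of $\lambda(\cdot,x)$ on its increasing and decreasing branches, the upward (respectively downward) jumps of $\lambda$ translate into flat stretches of the bounds in $\alpha$ rather than jumps, while the strict monotonicity between plateaus rules out any flat stretch of $\lambda$ that could produce a jump in a bound. Hence (a) and (b) are equivalent, and both hold if and only if no $(\theta_0,x)$ satisfies \eqref{bimcond}.
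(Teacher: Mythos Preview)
Your decomposition of $\lambda(\theta,x)$ into smooth pieces joined by jumps, and your identification of $\theta_r$ as the unique minimizer of each piece via A5, is exactly what the paper does. The necessity direction (if \eqref{bimcond} holds at some $(\theta_0,x)$ then $\lambda$ fails to be bimonotone) is argued identically.

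Where you differ from the paper is in the sufficiency direction. The paper effectively argues only necessity: it writes ``$\lambda(\theta,x)$ can only be bimonotone if it is monotone whenever it is continuous'' and then characterizes when a piece has the wrong monotonicity; it never verifies that the jumps themselves go the right way, so that ``each piece increasing on the left'' actually assembles into ``$\lambda$ strictly increasing on the left''. You correctly isolate this as the missing ingredient and propose to prove that on $(\inf\Theta,\theta_-)$ the index $k_1(\theta,x)=x$ is frozen while $k_2$ is nondecreasing, so all jumps are upward. This is the right lemma to want.

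However, the step ``for $\theta$ to the left of the plateau one has $x>x_\theta$'' is precisely what B3$^+$(iii) supplies and is not obviously a consequence of B1--B3 alone. Without it you cannot rule out that $x<x_\theta$ for some $\theta<\theta_-$, in which case it is $k_2$ that is frozen at $x$ and $k_1$ that moves; worse, nothing in A2 or B3 by itself forces $k_2(\theta,x)$ (or $k_1$) to be monotone in $\theta$. So your proposed jump-direction lemma is the correct target, but the sketch you give leans on hypotheses that are not in force for this proposition; the paper simply does not address the point at all. In short: same approach, you are more honest about the gap, but neither you nor the paper closes it under A1--A5 and B1--B3 alone.

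For part (b), your treatment is cleaner than the paper's. The paper invokes only Proposition~\ref{propbim}, which gives (b) $\Rightarrow$ (a); the converse (a) $\Rightarrow$ (b) needs exactly your observation that the generalized inverse of a strictly monotone (possibly jump-discontinuous) function is continuous, with jumps of $\lambda$ becoming flat stretches of the bound.
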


For any given $P_\theta$, we can evaluate numerically whether the
bimonotonicity condition \eqref{bimcond} is violated for a pair $(\theta
_0,x)$. We have not been able to find a strictly two-sided test that
passes \eqref{bimcond} for any $x$. Proposition~\ref{proposition1} is
illustrated in the Poisson and binomial settings in Figures~\ref{fig1}--\ref{fig2}. When $x=2$ from a Poisson random variable has been
observed, the $p$-value functions of the Sterne and Blaker tests are
non-bimonotone for the first time when $\theta=3$. For the likelihood
ratio test, the first occurrence is at $\theta=1$ and for the score
test the first occurrence is at $\theta=\sqrt{12}$.

A consequence of $\lambda(\theta,x)$ lacking bimonotonicity is that the
confidence ``interval'' $\{\theta: \lambda(\theta,x)>\alpha\}$ may
contain holes, and therefore not be an interval at all. The common
remedy for this is to redefine the intervals as the convex hull of $\{
\theta: \lambda(\theta,x)>\alpha\}$, as we did in Definition~\ref{def3}. This does not change the infimum or supremum of the set, and
does therefore not affect nestedness or continuity of the bounds.
{Similarly, Fay \cite{fay10a} proposed handling the problem of
non-bimonotone $p$-value functions by redefining the $p$-values using the
convex hull of $\{\theta: \lambda(\theta,x)>\alpha\}$. The redefined
$p$-values are constant where they previously were non-monotone. By
Proposition~\ref{propbim}, the bounds of the corresponding intervals
are however still discontinuous in $\alpha$.}


For the binomial and negative binomial distributions, the left-hand
side of (\ref{Heq}) is a polynomial of order $k_1(\theta_0,x)-k_2(\theta
_0,x)-1$. For the Poisson distribution, it is straightforward to find a
general solution to (\ref{Heq}), which yields the following
proposition, the proof of which is omitted.

\begin{proposition}\label{lemmA1}
For $X\sim \operatorname{Poisson}(\theta)$, the $p$-value function $\lambda(\theta,x)$
belonging to a strictly two-sided test is bimonotone in $\theta$ if and
only if there does not exist $(\theta,x)$ such that either
\begin{itemize}
\item$\theta<\inf\{\theta: \lambda(\theta,x)=1\}$ and $\theta<
(\frac{(k_1(\theta,x)-1)!}{k_2(\theta,x)!} )^{1/(k_1(\theta
,x)-k_2(\theta,x)-1)}$,\vspace*{1.5pt} or
\item$\theta>\sup\{\theta: \lambda(\theta,x)=1\}$ and $\theta>
(\frac{(k_1(\theta,x)-1)!}{k_2(\theta,x)!} )^{1/(k_1(\theta
,x)-k_2(\theta,x)-1)}$.
\end{itemize}
\end{proposition}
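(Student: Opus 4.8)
The plan is to obtain Proposition \ref{lemmA1} as a direct specialization of Proposition \ref{proposition1}, since the Poisson distribution satisfies A4 and A5 and hence all the hypotheses of that proposition are already in force. The only thing left to do is to solve the defining equation \eqref{Heq} for $\theta_r(\theta_0,x)$ in closed form for the Poisson case and substitute the result into the bimonotonicity condition \eqref{bimcond}. Everything else, including the logical structure of the ``if and only if'', is inherited unchanged from Proposition \ref{proposition1}.

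First I would compute the derivative of the Poisson mass function. Writing $\Prob_\theta(X=k)=e^{-\theta}\theta^k/k!$ and differentiating in $\theta$ gives the identity $\frac{d}{d\theta}\Prob_\theta(X=k)=\Prob_\theta(X=k-1)-\Prob_\theta(X=k)$, with the convention $\Prob_\theta(X=-1)=0$. The key observation is that this turns the summand in \eqref{Heq} into a telescoping difference. Summing from $k=k_2(\theta_0,x)+1$ up to $k_1(\theta_0,x)-1$, all interior terms cancel, leaving only $\Prob_\theta(X=k_2(\theta_0,x))-\Prob_\theta(X=k_1(\theta_0,x)-1)$. Thus \eqref{Heq} collapses to a single equation in two Poisson probabilities.

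Setting this difference to zero and cancelling the common factor $e^{-\theta}$ reduces \eqref{Heq} to $\theta^{k_2}/k_2!=\theta^{k_1-1}/(k_1-1)!$, where I abbreviate $k_i=k_i(\theta_0,x)$. Since assumption A5 forces $k_1\geq k_2+2$, the exponent $k_1-k_2-1$ is a positive integer, so this equation has the unique positive solution $\theta_r(\theta_0,x)=\big((k_1-1)!/k_2!\big)^{1/(k_1-k_2-1)}$, which automatically lies in the interior of $\Theta=(0,\infty)$. Substituting this expression for $\theta_r(\theta_0,x)$ into \eqref{bimcond} and relabelling $\theta_0$ as $\theta$ yields precisely the two displayed inequalities in the statement, which completes the argument.

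I do not expect a genuine obstacle here: once the telescoping identity for $\frac{d}{d\theta}\Prob_\theta(X=k)$ is spotted, the closed-form root of \eqref{Heq} is immediate, and the uniqueness required by Proposition \ref{proposition1} is guaranteed by the monotonicity of $\theta\mapsto\theta^{k_1-k_2-1}$ on $(0,\infty)$. The only point needing a moment of care is the edge case $k_1=k_2+2$, where the sum in \eqref{Heq} has a single term; there one should verify that the formula correctly returns $\theta_r=k_2+1$, which it does, and that the $k=0$ term of the derivative identity is handled consistently by the stated convention.
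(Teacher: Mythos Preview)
Your proposal is correct and is precisely the argument the paper has in mind: the paper explicitly states that the proof is omitted because ``it is straightforward to find a general solution to (\ref{Heq})'' in the Poisson case, and your telescoping computation via $\frac{d}{d\theta}\Prob_\theta(X=k)=\Prob_\theta(X=k-1)-\Prob_\theta(X=k)$ is exactly that general solution. The remark following the proposition, rewriting $\theta_r$ as the geometric mean of $\mathcal{A}_{\theta,x}^c$, confirms that the authors had the same closed form in hand.
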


Note that if we let $n=k_1(\theta,x)-k_2(\theta,x)-1$ then
\[
\biggl(\frac{(k_1(\theta,x)-1)!}{k_2(\theta,x)!} \biggr)^{1/(k_1(\theta
,x)-k_2(\theta,x)-1)}= \Biggl(\prod
_{k=k_2(\theta,x)+1}^{k_1(\theta
,x)-1}k \Biggr)^{1/n},
\]
the geometric mean of $\mathcal{A}_{\theta,x}^c$.

\section{Some results for fiducial intervals}\label{optimality}
\subsection{Fiducial intervals are strictly nested and have continuous bounds}
The test corresponding to the fiducial intervals is not strictly
two-sided. Its $p$-values are defined by \eqref{fidtest}.
The following proposition, the proof of which can be found in Appendix~\ref{proofs}, states that fiducial intervals do not suffer from the
problems associated with strictly two-sided intervals.

\begin{proposition}\label{fidpropr}
Under \textup{A1}--\textup{A3} and \textup{A4}, fiducial intervals are strictly nested.
Moreover, for any $x\in\mathcal{X}$ the bounds of the interval are
continuous in $\alpha$ and $\lambda_f(\theta,x)$ is continuous in
$\theta$.
\end{proposition}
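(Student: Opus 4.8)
The plan is to work directly with the two tail functions $F(\theta,x)=\Prob_\theta(X\leq x)$ and $G(\theta,x)=\Prob_\theta(X\geq x)$, in terms of which the fiducial p-value is $\lambda_f(\theta,x)=\min(2F(\theta,x),2G(\theta,x),1)$. The continuity claim for $\lambda_f$ is the quickest to dispatch: for fixed $x$ each of $F$ and $G$ is a finite sum of the probabilities $\Prob_\theta(X=k)$, which are differentiable, and hence continuous, in $\theta$ by A3; thus $F$ and $G$ are continuous, and $\lambda_f$, being the pointwise minimum of continuous functions, is continuous in $\theta$. In particular no jumps occur, so the $\liminf$ convention of Definition \ref{def31} never comes into play.

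Next I would record the monotonicity and range of the two tails, which is where A2 and A4 enter. For any $x$ that is neither $\inf\mathcal{X}$ nor $\sup\mathcal{X}$, A2 makes $F(\cdot,x)$ strictly decreasing and $G(\cdot,x)=1-F(\cdot,x-1)$ strictly increasing, while A4 gives $F(\theta,x)\to 1$, $G(\theta,x)\to 0$ as $\theta\to\inf\Theta$ and $F(\theta,x)\to 0$, $G(\theta,x)\to 1$ as $\theta\to\sup\Theta$. Hence $F(\cdot,x)$ and $G(\cdot,x)$ are continuous, strictly monotone bijections of $\Theta$ onto $(0,1)$, and therefore admit continuous, strictly monotone inverses. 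The role of A1 is to guarantee $F(\theta,x)+G(\theta,x)=1+\Prob_\theta(X=x)>1$ for every $\theta$, which will make the resulting interval nonempty.

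With these facts in hand the interval structure falls out. For $\alpha\in(0,1)$ the cap at $1$ is inactive, so
\[
\{\theta:\lambda_f(\theta,x)>\alpha\}=\{\theta:F(\theta,x)>\alpha/2\}\cap\{\theta:G(\theta,x)>\alpha/2\}=(L_\alpha(x),U_\alpha(x)),
\]
where $L_\alpha(x)$ solves $G(\theta,x)=\alpha/2$ and $U_\alpha(x)$ solves $F(\theta,x)=\alpha/2$; the set is nonempty because at the crossing point $\theta_c$ where $F=G$ one has $F(\theta_c,x)=(1+\Prob_{\theta_c}(X=x))/2>1/2>\alpha/2$ by A1, so $\theta_c$ lies in it. Writing $L_\alpha(x)=G(\cdot,x)^{-1}(\alpha/2)$ and $U_\alpha(x)=F(\cdot,x)^{-1}(\alpha/2)$, continuity of the inverses gives continuity of both bounds in $\alpha$, and their strict monotonicity gives that $L_\alpha(x)$ is strictly increasing and $U_\alpha(x)$ strictly decreasing in $\alpha$. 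Consequently, for $1>\alpha>\alpha_0>0$ we have $L_{\alpha_0}(x)<L_\alpha(x)<U_\alpha(x)<U_{\alpha_0}(x)$, so the $1-\alpha$ interval is a proper subset of the $1-\alpha_0$ interval and the interval is strictly nested.

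The remaining care is for the two boundary observations. If $x=\inf\mathcal{X}$ then $G(\theta,x)\equiv 1$, so $\lambda_f=\min(2F,1)$, the increasing branch disappears, and $L_\alpha(x)=\inf\Theta$ is constant while $U_\alpha(x)$ is still continuous and strictly decreasing; the symmetric situation holds for $x=\sup\mathcal{X}$. In either case one bound is the (open) endpoint of $\Theta$ and is trivially continuous, while the other moves strictly, so the interval still strictly shrinks and all three conclusions persist. I expect the main obstacle to be the bookkeeping of the previous paragraph: verifying cleanly that the level set $\{\theta:\lambda_f(\theta,x)>\alpha\}$ is exactly the open interval cut out by the two inverse-tail equations and is nonempty, which is the only place the cap at $1$ and condition A1 interact. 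Once that is done, continuity of the bounds and strict nestedness are immediate consequences of inverting continuous, strictly monotone functions.
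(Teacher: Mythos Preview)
Your argument is correct and follows essentially the same route as the paper's proof: establish that the tail function $F(\cdot,x)$ is a continuous strictly monotone bijection onto $(0,1)$ (the paper phrases this as a homeomorphism on the closure $\overline{\Theta}\to[0,1]$ via compactness, you via the elementary inverse-function fact), then read off continuity and strict monotonicity of the bounds from the inverse, and get continuity of $\lambda_f$ directly from A3. One small slip: your claim that ``each of $F$ and $G$ is a finite sum'' fails for $G$ when $\mathcal{X}$ is unbounded above (e.g.\ Poisson), but since you immediately write $G(\cdot,x)=1-F(\cdot,x-1)$ the continuity of $G$ is still established correctly and nothing downstream is affected.
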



\subsection{Optimality results}
For a binomial proportion, Wang \cite{wang06} presented results claiming
that under certain conditions on $\alpha$ and $n$ the fiducial
Clopper--Pearson interval is the shortest interval in the class of
exact confidence intervals with monotone bounds. A counterexample to
the optimality result of \cite{wang06} is the strictly two-sided
Blaker interval \cite{bl1}, which always is contained in the
Clopper--Pearson interval. Among equal-tailed intervals however,
fiducial intervals posses length optimality properties. We expect that
this is known, but have not been able to find such results in the
literature, for which reason we briefly cover length optimality below.

Our main tool for showing length optimality is a theorem due to \cite
{bolshev65}. Under assumptions A1, A2 and A3, consider the class
$\mathcal{M}_{L,\alpha}$ of one-sided $1-\alpha$ confidence bounds
$(L_\alpha(x),\infty)\cap\Theta$ for $\theta\in\Theta$ based on an
observation $x$ of $X\sim P_\theta$ satisfying the following three criteria:
\begin{longlist}[C3.]
\item[C1.] $L_\alpha(x)\leq L_\alpha(x+1)$,
\item[C2.] $\inf_{\theta\in\Theta}\Prob_\theta(L_\alpha(x)\leq\theta
)\geq1-\alpha$,
\item[C3.] $L_\alpha(x)$ only depends on $x$, $\alpha$ and $P_\theta$.
\end{longlist}

Criterion C3 rules out randomized bounds, which can be shorter while
maintaining exact coverage, but rely on conditioning on information not
contained in the sufficient statistic; see, for example, \cite{thu3}.
C3 is implicit in Bolshev's paper; we have added it here for clarity.
$\mathcal{M}_{L,\alpha}$ is the class of monotone exact lower
confidence bounds. We call an interval (or a bound) $I_\alpha(x)$ in a
class of intervals $\mathcal{K}$ the smallest interval in $\mathcal{K}$
if, for any other interval $I^*_\alpha(x)\in\mathcal{K}$, $I_\alpha
(x)\setminus I^*_\alpha(x)=\varnothing$. For the $\mathcal{M}_{L,\alpha
}$ class,  Bolshev \cite{bolshev65} proved that the one-sided lower fiducial
bound is the smallest bound in $\mathcal{M}_{L,\alpha}$. Under
analogous conditions, the upper fiducial bound is similarly the
smallest bound in the set $\mathcal{M}_{U,\alpha}$ of exact monotone
upper confidence bounds.

The extension of Bolshev's theorem to two-sided confidence intervals is
straightforward and does not require the additional conditions that
Wang
\cite{wang06} used in the binomial setting. Consider the class
$\mathcal{M}_\alpha$ of exact equal-tailed confidence intervals
$(L_{\alpha/2}(x),U_{\alpha/2}(x))$ for $\theta$ based on an
observation $x$ of $X\sim P_\theta$ satisfying
\begin{longlist}
\item[D1.] $L_{\alpha/2}(x)\leq L_{\alpha/2}(x+1)~$ and $~U_{\alpha
/2}(x)\leq U_{\alpha/2}(x+1)$,
\item[D2.] $\inf_{\theta\in\Theta}\Prob_\theta(L_{\alpha/2}(x)\leq\theta
)\geq1-\alpha/2~$ and $~\inf_{\theta\in\Theta}\Prob_\theta(U_{\alpha
/2}(x)\geq\theta)\geq1-\alpha/2$,
\item[D3.] $(L_{\alpha/2}(x),U_{\alpha/2}(x))$ only depends on $x$,
$\alpha$ and $P_\theta$.
\end{longlist}
Note that if an interval belongs to $\mathcal{M}_\alpha$ then it is the
intersection of a bound in $\mathcal{M}_{L,\alpha/2}$ and a bound in
$\mathcal{M}_{U,\alpha/2}$.

\begin{proposition}\label{thm4}
The fiducial interval is the smallest interval in $\mathcal{M}_\alpha$.
\end{proposition}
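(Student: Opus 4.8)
The plan is to reduce the two-sided claim to the two one-sided optimality results of \citet{bolshev65} by exploiting the decomposition noted just before the proposition: every interval $(L_{\alpha/2}(x),U_{\alpha/2}(x))\in\mathcal{M}_\alpha$ splits into a lower endpoint that is a member of $\mathcal{M}_{L,\alpha/2}$ and an upper endpoint that is a member of $\mathcal{M}_{U,\alpha/2}$. This holds because conditions D1, D2 and D3, when read off the lower (respectively upper) endpoint alone, are precisely C1, C2 and C3 applied at level $\alpha/2$.

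First I would check that the fiducial interval is itself a member of $\mathcal{M}_\alpha$, so that it is an admissible candidate for the smallest interval. Denote its endpoints, defined through \eqref{fidint}, by $L^f_{\alpha/2}(x)$ and $U^f_{\alpha/2}(x)$. Monotonicity in $x$ (condition D1) follows from A2: since $\Prob_\theta(X\leq x)$ is increasing in $x$ and strictly decreasing in $\theta$, the solution in $\theta$ of $\sum_{k\leq x}\Prob_\theta(X=k)=\alpha/2$ increases with $x$, and an analogous argument applies to the upper endpoint. The equal-tailed coverage D2 holds by the very construction \eqref{fidint}, and D3 is immediate. Hence $L^f_{\alpha/2}\in\mathcal{M}_{L,\alpha/2}$ and $U^f_{\alpha/2}\in\mathcal{M}_{U,\alpha/2}$.

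Next I would invoke Bolshev's theorem separately in each tail. Because the fiducial lower bound is the smallest bound in $\mathcal{M}_{L,\alpha/2}$, its confidence set $(L^f_{\alpha/2}(x),\infty)\cap\Theta$ is contained in $(L_{\alpha/2}(x),\infty)\cap\Theta$ for every competitor, which is equivalent to the pointwise inequality $L^f_{\alpha/2}(x)\geq L_{\alpha/2}(x)$ for all $x\in\mathcal{X}$. The dual statement for the upper bound gives $U^f_{\alpha/2}(x)\leq U_{\alpha/2}(x)$ for all $x$. Combining the two inequalities yields $(L^f_{\alpha/2}(x),U^f_{\alpha/2}(x))\subseteq(L_{\alpha/2}(x),U_{\alpha/2}(x))$ for every interval in $\mathcal{M}_\alpha$; that is, the set difference of the fiducial interval and any competitor is empty, which is exactly the definition of the fiducial interval being the smallest interval in $\mathcal{M}_\alpha$.

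The step I expect to require the most care is the translation between Bolshev's notion of a \emph{smallest bound}, phrased as inclusion of the one-sided confidence sets, and the endpoint inequalities used above: the smallest lower confidence set corresponds to the \emph{largest} admissible lower endpoint, while the smallest upper confidence set corresponds to the \emph{smallest} admissible upper endpoint, so that the two optimal one-sided bounds tighten the interval from opposite sides simultaneously. Once these directions are fixed, the argument is entirely structural and needs no further computation, inheriting its hypotheses A1, A2 and A3 directly from those under which Bolshev's theorem is stated.
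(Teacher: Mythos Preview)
Your proposal is correct and follows essentially the same route as the paper: decompose the equal-tailed interval into its two one-sided bounds, each lying in $\mathcal{M}_{L,\alpha/2}$ respectively $\mathcal{M}_{U,\alpha/2}$, and apply Bolshev's theorem in each tail. The paper phrases the argument by contradiction (assuming a competitor with $I_\alpha(x)\backslash I^*_\alpha(x)\neq\emptyset$ and deriving a violation of Bolshev's optimality), whereas you argue directly that $L^f_{\alpha/2}(x)\geq L_{\alpha/2}(x)$ and $U^f_{\alpha/2}(x)\leq U_{\alpha/2}(x)$; this is a purely cosmetic difference, and your explicit verification that the fiducial interval belongs to $\mathcal{M}_\alpha$ is a detail the paper leaves implicit.
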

\begin{pf}
Let $I_\alpha(x)=(L_{\alpha/2}(x),U_{\alpha/2}(x))$ denote the fiducial
interval and assume that there is an interval $I^*_\alpha
(x)=(L^*_{\alpha/2}(x),U^*_{\alpha/2}(x))$ in $\mathcal{M}_\alpha$ such
that $I_\alpha(x)\setminus I^*_\alpha(x)\neq\varnothing$. Then
$L^*_{\alpha/2}(x)>L_{\alpha/2}(x)$ or $U^*_{\alpha/2}(x)<U_{\alpha
/2}(x)$. Consequently, at least one of the one-sided bounds
$(L^*_{\alpha/2}(x),\infty)\cap\Theta$ or $(-\infty,U^*_{\alpha
/2}(x))\cap\Theta$ is smaller than the corresponding fiducial bound. By
Bolshev's theorem, this means that $I^*_\alpha(x)$ is not in $\mathcal
{M}_\alpha$, which is a contradiction.
\end{pf}

Similar results can be obtained for intervals with fixed but unequal
tails, in a completely analogue manner.

Finally, the fact that the fiducial interval is the smallest interval
in $\mathcal{M}_\alpha$ leads to the following proposition, in which
the smallness is expressed in the more familiar terms of the interval
length $U_{\alpha/2}(x)-L_{\alpha/2}(x)$.

\begin{proposition}\label{cor1}
Among the intervals in $\mathcal{M}_\alpha$, the fiducial interval
minimizes the expected length for all $\theta\in\Theta$ as well as the
length for all $x\in\mathcal{X}$.
\end{proposition}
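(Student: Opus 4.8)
The plan is to derive both conclusions directly from Proposition \ref{thm4}, which asserts that the fiducial interval is the smallest interval in $\mathcal{M}_\alpha$ in the sense of set inclusion: for any competing interval $I^*_\alpha(x)=(L^*_{\alpha/2}(x),U^*_{\alpha/2}(x))\in\mathcal{M}_\alpha$ and any $x\in\mathcal{X}$, the fiducial interval $I_\alpha(x)=(L_{\alpha/2}(x),U_{\alpha/2}(x))$ satisfies $I_\alpha(x)\backslash I^*_\alpha(x)=\emptyset$, i.e. $I_\alpha(x)\subseteq I^*_\alpha(x)$. The whole proof is then a matter of translating this inclusion first into a statement about lengths and then into a statement about expected lengths.

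First I would handle the length claim for fixed $x$. Since both $I_\alpha(x)$ and $I^*_\alpha(x)$ are intervals in $\mathbb{R}$, the inclusion $I_\alpha(x)\subseteq I^*_\alpha(x)$ forces the endpoints to satisfy
\[
L^*_{\alpha/2}(x)\leq L_{\alpha/2}(x)\leq U_{\alpha/2}(x)\leq U^*_{\alpha/2}(x),
\]
so that
\[
U_{\alpha/2}(x)-L_{\alpha/2}(x)\leq U^*_{\alpha/2}(x)-L^*_{\alpha/2}(x).
\]
This holds for every $x\in\mathcal{X}$ and every $I^*_\alpha\in\mathcal{M}_\alpha$, which is precisely the claim that the fiducial interval minimizes the length for all $x$.

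Next I would integrate this pointwise inequality against the distribution $P_\theta$. Writing the expected length as $\sum_{x\in\mathcal{X}}\big(U_{\alpha/2}(x)-L_{\alpha/2}(x)\big)\Prob_\theta(X=x)$ and using that the point masses $\Prob_\theta(X=x)$ are nonnegative, the fixed-$x$ length inequality carries over term by term to give
\[
\sum_{x\in\mathcal{X}}\big(U_{\alpha/2}(x)-L_{\alpha/2}(x)\big)\Prob_\theta(X=x)
\leq
\sum_{x\in\mathcal{X}}\big(U^*_{\alpha/2}(x)-L^*_{\alpha/2}(x)\big)\Prob_\theta(X=x)
\]
for every $\theta\in\Theta$, establishing minimality of the expected length for all $\theta$ simultaneously.

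There is essentially no hard step here: the content is entirely supplied by Proposition \ref{thm4}, and the only thing to verify is the elementary fact that set inclusion of real intervals implies the corresponding ordering of lengths, after which the expected-length statement follows from monotonicity of the (nonnegative) weighted sum. The one point worth stating carefully is that the inequality is uniform in $\theta$ precisely because the underlying inclusion holds separately for each $x$ rather than only on average, so that no averaging over $\theta$ or interchange of inequalities is needed.
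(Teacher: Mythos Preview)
Your proposal is correct and follows essentially the same approach as the paper: invoke Proposition \ref{thm4} to conclude that the fiducial interval is contained in any competitor for each $x$, deduce the pointwise length inequality, and then sum against $\Prob_\theta(X=x)$ to obtain the expected-length inequality. The paper's version is slightly terser (it argues by contradiction on the endpoints rather than writing out the inclusion chain), but the logic is identical.
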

\begin{pf}
For an interval $(L^*_{\alpha/2}(x),U^*_{\alpha/2}(x))\in\mathcal
{M}_\alpha$ to have shorter length than the fiducial interval
$(L_{\alpha/2}(x),U_{\alpha/2}(x))$ it must hold that $L^*_{\alpha
/2}(x)>L_{\alpha/2}(x)$ or $U^*_{\alpha/2}(x)<U_{\alpha/2}(x)$. By\vspace*{1pt}
Proposition~\ref{thm4} neither condition can be fulfilled. Since the
fiducial interval therefore minimizes the length for each $x$, it also
minimizes the expected length $\sum_k\Prob_\theta(X=k)(U_{\alpha
/2}(k)-L_{\alpha/2}(k))$.
\end{pf}

A consequence of Proposition~\ref{cor1} is that, in the class of
equal-tailed two-sided tests of $\theta=\theta_0$, the test that
corresponds to the fiducial interval is admissible in the sense of
Cohen and Strawderman\vspace*{-6pt} \cite{cohen}.

\section{Conclusion}\label{discussion}
There exist a large number of methods for obtaining exact confidence
intervals that are shorter than the equal-tailed fiducial intervals.
The use of such an interval comes at the cost of losing control over
the balance between the coverage levels of the corresponding lower and
upper confidence bounds. In many situations it is preferable to use an
equal-tailed interval, in order to guard equally against overestimation
and underestimation and not to bias the inference in some direction.
The case for equal-tailed intervals is further strengthened by the fact
that strictly two-sided confidence intervals lack strict nestedness.
This causes difficulties with the interpretation of the intervals: what
does it mean that, for a particular $x$, the 92\% interval equals the
95\% interval? Which confidence level should be reported for such an
interval? More seriously, we have also seen that such intervals may
yield highly disparate conclusions for two indistinguishable models
$P_\theta$ and $P_{\theta+\varepsilon}$. From a hypothesis testing
perspective, this occurs when the null hypothesis $\theta_0$ is changed
slightly. From a confidence interval perspective, it can occur for
small changes in $\alpha$, since the bounds of strictly two-sided
intervals typically are discontinuous in $\alpha$. These problems have
been pointed out for specific intervals in the past \cite{bl1,vh1}. We
have shown that they in fact are inherent to strictly two-sided
confidence intervals.

The problems discussed in this paper arise also for strictly two-sided
methods for discrete distributions not covered by Definition~\ref
{def2}. Examples include the hypergeometric distribution and the joint
distribution of two binomial proportions. We have restricted our
attention to the class of distributions given by Definition~\ref{def2}
in order to keep the proofs reasonably short.

Strictly two-sided and equal-tailed confidence intervals are the most
commonly used types of two-sided confidence intervals. We have seen
that strictly-two sided intervals lack strict nestedness and that an
extension of Bolshev's theorem shows that the standard fiducial
intervals are the shortest equal-tailed exact intervals. While fiducial
intervals have been criticized for being overly conservative and too
wide \cite{bcd1,agresti2,byrne05}, the conclusion of this paper is
that they for practical purposes in fact are the optimal strictly
nested intervals.\vspace*{-3pt}

\begin{appendix}
\section{Strictly two-sided tests}\label{app1}
\subsection{The likelihood ratio and Sterne tests}
Let $L(\theta,x)=\Prob_\theta(X=x)$ be the likelihood function of
$P_\theta$. The likelihood ratio statistic is
\[
T_{\mathrm{LR}}(\theta_0,x)=\frac{\sup_{\theta\in\Theta}L(\theta,x)}{L(\theta_0,x)}
\]
and the Sterne statistic is
\[
T_{\mathrm{St}}(\theta_0,x)=1/L(\theta_0,x).
\]
Both these statistics are minimized when $\theta_0$ is the maximum
likelihood estimator of $\theta$ given~$x$. Thus for B1 to be satisfied
it suffices that the maximum likelihood estimator of $\theta$ is
well-defined and strictly monotone in $x$. B2 is satisfied when there
exists a $\theta$ such that $L(\theta,x)$ is an asymmetric function of
$x$. By definition, B3 is satisfied if there exists an $x_0$ such that
$L(\theta,x)$ is increasing when $x<x_0$ and decreasing when $x>x_0$.
This is guaranteed if $P_\theta$ has a monotone likelihood ratio.

The binomial, negative binomial and Poisson distributions all have
well-defined and strictly monotone maximum likelihood estimators and
monotone likelihood ratios. Moreover, their probability functions are
in general asymmetric in $x$. The likelihood ratio and Sterne tests
therefore satisfy conditions B1--B3 for these models.

\subsection{The score test}
Let $U(\theta,x)=\frac{\partial}{\partial\theta}\ln L(\theta,x)$ and
let $I(\theta)$ be the Fisher information of $P_\theta$. The score test
statistic is
\[
T_{Sc}(\theta_0,x)=\frac{(U(\theta_0,x))^2}{I(\theta_0)}.
\]

If the maximum likelihood estimator of $\theta$ exists and is unique,
then B1 is satisfied, with $\theta_x$ being the maximum likelihood
estimator of $\theta$ given $x$. B2 is satisfied if the distribution of
$U(\theta,x)^2$ is asymmetric for some $\theta$. B3 is satisfied if
there exists an $x_0$ such that $U(\theta,x)^2$ is decreasing when
$x<x_0$ and increasing when $x>x_0$.

If $P_\theta$ is a regular exponential family with natural parameter
$\theta$, then $U(\theta,x)=x-\mbox{E}_\theta(X)$ and $I(\theta)=\operatorname{Var}_\theta(X)$. B2 is satisfied if the distribution of $X^2$ is
asymmetric for some $\theta$ and B3 is satisfied since $(x-\mbox
{E}_\theta(X))^2$ is convex in $x$. B1--B3 are therefore satisfied for
the binomial, negative binomial and Poisson distributions, using the
natural parametrizations. These conditions are also satisfied for the
most commonly used alternative parametrizations.

\subsection{The Blaker test}
The Blaker statistic is
\[
T_B(\theta_0,x)\propto \bigl(\lambda_T(
\theta_0,x) \bigr)^{-1},
\]
where $\lambda_T(\theta_0,x)$ is the $p$-value of a test with a rejection
region that is the union of the rejection regions of two one-sided
level $\alpha/2$ tests. The properties of $T(\theta_0,x)$ therefore
depend on the choice of $\lambda_T(\theta_0,x)$. A typical choice is
the fiducial $p$-value \eqref{fidtest}.

Under\vspace*{1pt} A3 and A4, for any $x\in\mathcal{X}$ there exist $\theta_x$ such
that $\sum_{k\leq x-1}\Prob_{\theta_x}(X=k)<1/2$ and $\sum_{k\geq
x+1}\Prob_{\theta_x}(X=k)<1/2$. Then\vspace*{1pt} we have $\sum_{k\leq x}\Prob
_{\theta_x}(X=k)\geq1/2$ and $\sum_{k\geq x}\Prob_{\theta_x}(X=k)\geq
1/2$, so that $\lambda_f(\theta,x)=1$. Let $\Theta_1(x)$ denote the set
of such $\theta_x$.

Now,\vspace*{1.5pt} let $y=x-1$. Then $\sum_{k\leq y-1}\Prob_{\theta_x}(X=k)<1/2$ but
$\sum_{k\geq y+1}\Prob_{\theta_x}(X=k)=\sum_{k\geq x}\Prob_{\theta
_x}(X=k)\geq1/2$, so if $\theta_x\in\Theta_1(x)$ then $\theta_x\notin
\Theta_1(y)$. Similarly,\vspace*{1.5pt} if we let $y=x+1$, $\sum_{k\leq y-1}\Prob
_{\theta_x}(X=k)=\sum_{k\leq x}\Prob_{\theta_x}(X=k)\geq1/2$, so if
$\theta_x\in\Theta_1(x)$ then $\theta_x\notin\Theta_1(y)$. Thus,\vspace*{1.5pt} A3 and
A4 are sufficient for B1 to hold for the Blaker statistic based on the
fiducial $p$-value.

B2 holds if the distribution of $\lambda_T(\theta,x)$ is asymmetric in
$x$ for some $\theta$. For $\lambda_f(\theta,x)$ this holds if $\Prob
_\theta(X= x)$ is asymmetric as a function of $x$ for some $\theta$.

Finally, B3 is satisfied since the monotonicity of $\Prob_\theta(X\leq
x)$ in $x$ implies that $\lambda_T(\theta,x)$ is a bimonotone function
of $x$. B1--B3 are therefore satisfied for the binomial, negative
binomial and Poisson distributions.

\section{Proofs}\label{proofs}
\subsection{Proof of Proposition \texorpdfstring{\protect\ref{expfamprop}}{1}}
If $P_\theta$ is a discrete one-parameter exponential family in natural
form, for $(\theta,x)\in\Theta\times\mathcal{X}$ its probability
function can be written as
%
\begin{equation}
\label{expfam} p_\theta(x)=\Prob_\theta(X=x)=\exp\bigl\{\theta
T(x)-K(\theta)\bigr\}h(x),
\end{equation}
where $T: \mathcal{X}\mapsto\mathbb{R}$ is a function that does not
depend on $\theta$ and $K: \Theta\mapsto\mathbb{R}$ is infinitely often
differentiable in $\Theta$ since $P_\theta$ is regular \cite{liese}, Theorem~1.17. $\theta$, $T(x)$ and $K(\theta)$ are all finite for
$(\theta,x)\in\Theta\times\mathcal{X}$, and thus \eqref{expfam} is
strictly positive when $(\theta,x)\in\Theta\times\mathcal{X}$, yielding
A1. Moreover, A3 follows from the fact that when $x$ is fixed \eqref
{expfam} is differentiable in $\theta$ since $\theta T(x)$ and $K(\theta
)$ are infinitely differentiable.

To see that an increasing likelihood ratio implies A2, let $\ell
(x)=p_{\theta_2}(x)/p_{\theta_1}(x)$ for $\theta_1<\theta_2$ in~$\Theta
$. The likelihood ratio $\ell(x)$ is increasing in $x$. Let
\[
F_\theta(x)=\Prob_\theta(X\leq x)=\sum
_{k\leq x}\Prob_\theta(X=k)
\]
and
\[
G_\theta(x)=\Prob_\theta(X> x)=\sum
_{k> x}\Prob_\theta(X=k).
\]
We consider the cases when $\ell(x)\leq1$ and $\ell(x)\geq1$ separately.

If $\ell(t)\leq1$ then for $s<t$, $F_{\theta_2}(s)\leq F_{\theta
_1}(s)$ since $p_{\theta_2}(x)\leq p_{\theta_1}(x)$ for all $x<t$.

If $\ell(t)\geq1$ then $p_{\theta_2}(x)\geq p_{\theta_1}(x)$ when
$x>t$ and for $s>t$, $G_{\theta_2}(s)\geq G_{\theta_1}(s)$. Since
$F_\theta(x)=1-G_\theta(x)$, it follows that $F_{\theta_2}(s)\leq
F_{\theta_1}(s)$.

\subsection{Proof of Proposition \texorpdfstring{\protect\ref{B3prop}}{2}}
First, assume that $k\geq x_\theta$. Then by B3 $T(\theta,\cdot)$ is
increasing at $k$. There are two possible scenarios:
\begin{longlist}[(ii)]
\item[(i)] $k\geq k_1(\theta,x)$: By definition, $T(\theta,k_1(\theta
,x))\geq T(\theta,x)$. Since $T(\theta,\cdot)$ is increasing for $x\geq
x_\theta$ it follows that $T(\theta,k)\geq T(\theta,k_1(\theta,x))\geq
T(\theta,x)$, meaning that $k\in\mathcal{A}_{\theta,x}$.
\item[(ii)] $k< k_1(\theta,x)$: it follows from the definition of
$k_1(\theta,x)$ that $T(\theta,k)<T(\theta,x)$, so $k\notin\mathcal
{A}_{\theta,x}$.
\end{longlist}
In summary, if $k\geq x_\theta$ then $k\in\mathcal{A}_{\theta,x}$ if
and only if $k\geq k_1(\theta,x)$. An analogous argument shows that if
$k\leq x_\theta$ then $k\in\mathcal{A}_{\theta,x}$ if and only if
$k\leq k_2(\theta,x)$, and the first part of the proposition follows.

To see that at least one of $k_1(\theta,x)$ and $k_2(\theta,x)$ is
non-constant in $\theta$, note that by B1, for any pair $(x,y)\in
\mathcal{X}^2$ there exist $(\theta_x,\theta_y)\in\Theta^2$ such that
$x\notin\mathcal{A}_{\theta_x,y}$ but $x\in\mathcal{A}_{\theta_y,y}$.
The set $\mathcal{A}_{\theta,x}$ is therefore not constant in $\theta$,
and thus at least one of $k_1(\theta,x)$ and $k_2(\theta,x)$ must be
non-constant in~$\theta$.


\subsection{Proof of Proposition \texorpdfstring{\protect\ref{thm3}}{3}}
(a) and (b) were proved in Section~\ref{lackofstrict}. We will now
prove (c). Let $L_{\alpha}(x)$ and $U_{\alpha}(x)$ denote the lower and
upper bounds of the interval. We will show that for any $x\in\mathcal
{X}$ there exists an $\alpha_0\in(0,1)$ such that $L_\alpha(x)$ and
$U_\alpha(x)$ simultaneously are constant in a neighbourhood of $\alpha
_0$, so that the confidence interval is not strictly nested.

We introduce the mutually disjoint sets
%
\begin{eqnarray}
\Theta_1(x) &:=& \bigl\{\theta: \lambda(\theta,x)=1\bigr
\},
\nonumber
\\
\label{Thetas}
\Theta_\ell(x)&:=&\bigl\{\theta: \theta\leq\inf
\Theta_1(x)\bigr\} \quad\mbox{and}
\\
\Theta_u(x) &:=& \bigl\{\theta: \theta\geq\sup\Theta_1(x)
\bigr\},
\nonumber
\end{eqnarray}
which are such that $\Theta_\ell(x)\cup\Theta_1(x)\cup\Theta_u(x)=\Theta$. We also define
\[
\Theta_\alpha(x):=\bigl\{\theta: \lambda(\theta,x)> \alpha\bigr\}.
\]

By condition B1, given $x\in\mathcal{X}$ there exists $\theta_x\in\Theta
$ such that
\[
\lambda(\theta_x,x)=\Prob_{\theta_x}\bigl(T(
\theta_x,X)\geq T(\theta _x,x)\bigr)=1-
\Prob_{\theta_x}\bigl(T(\theta_x,X)< T(\theta_x,x)
\bigr)=1,
\]
so $\Theta_1(x)$ is non-empty. 

First, we investigate the behaviour of the bounds when either $\Theta
_\ell(x)$ or $\Theta_u(x)$ is empty. Let $\bar{\Theta}_\alpha(x)$ be
the closure of $\Theta_\alpha(x)$. Since $\Theta_1(x)\subseteq\bar
{\Theta}_\alpha(x)$ for all $\alpha\in(0,1)$,
\[
\sup\Theta_1(x)\in\bar{\Theta}_\alpha(x) \quad\mbox{and}\quad \inf
\Theta _1(x)\in\bar{\Theta}_\alpha(x).
\]
If $\inf\Theta_1(x)=\inf\Theta$, then $\Theta_\ell(x)=\{\theta: \theta
<\inf\Theta\}=\varnothing$. Then
\[
L_\alpha(x)=\inf\Theta_\alpha(x)\leq\inf\Theta_1(x)=
\inf\Theta\leq\inf \Theta_\alpha(x),
\]
so $L_\alpha(x)=\inf\Theta$ for all $\alpha\in(0,1)$. Similarly, if
$\sup\Theta_1(x)=\sup\Theta$ then $\Theta_u(x)=\{\theta: \theta>\sup
\Theta\}=\varnothing$, and
\[
U_\alpha(x)=\sup\Theta_\alpha(x)\geq\sup\Theta_1(x)=
\sup\Theta\geq\sup \Theta_\alpha(x),
\]
so $U_\alpha(x)=\sup\Theta$ for all $\alpha\in(0,1)$. Thus, when $\Theta
_\ell(x)$ is empty $L_\alpha(x)$ is constant and when $\Theta_u(x)$ is
empty $U_\alpha(x)$ is constant. In this case, whether or not the
interval is strictly nested therefore depends on whether there exists
an $\alpha\in(0,1)$ such that the other bound is constant in a
neighbourhood of $\alpha$. We will therefore without loss of generality
assume that neither $\Theta_\ell(x)$ nor $\Theta_u(x)$ are empty.


Let
\[
\alpha_\ell=\liminf_{\theta\to\inf\Theta_1(x)}\lambda(\theta,x)
\quad\mbox{and}\quad \alpha_u=\liminf_{\theta\to\sup\Theta_1(x)}\lambda (
\theta,x).
\]
Since $\mathcal{A}_{\theta,x}\neq\mathcal{X}$ for $\theta<\inf\Theta
_1(x)$, by A1 $\alpha_\ell<1$, and similarly $\alpha_u<1$. Thus, a
point is added to or removed from $\mathcal{A}_{\theta,x}$ at $\theta
=\inf\Theta_1(x)$ and $\theta=\sup\Theta_1(x)$, and by Lemma~\ref{B4}
the $p$-value function $\lambda(\theta,x)$ must have jumps at $\theta=\inf
\Theta_1(x)$ and at $\theta=\sup\Theta_1(x)$. Then for $\alpha_0=\max
(\alpha_\ell,\alpha_u)$, there is an $\alpha_1\in(\alpha_0,1)$ for
which there exists $\delta>0$ such that
\[
\bigl\{\theta\in\Theta: \lambda(\theta,x)\in(\alpha_1-\delta,
\alpha_1+\delta )\bigr\}=\varnothing.
\]
Thus both the upper and the lower bound of $I_\alpha(x)$ are constant
in a neighbourhood of $\alpha=\alpha_1$, and the interval is not
strictly nested.


\subsection{An auxiliary lemma}
The following auxiliary lemma will be used in the proof of Proposition~\ref{smallprop}.

\begin{lemma}\label{lemmat}
With $\mathcal{X}$ as in Proposition~\ref{smallprop}, for any $y,x\in
\mathcal{X}$ such that $0\leq y<x$, let
%
\begin{equation}
\label{lemma1}\theta_{y,x}=\inf\bigl\{\theta: T(\theta ,x)\leq T(
\theta,y)\bigr\}
\end{equation}
and define $\theta_{x,x}:=\inf\Theta_1(x)$. Under \textup{A2}$^+$ and \textup{B3}$^+$,
%
\begin{equation}
\label{lemma4}\theta_{0,x}\leq\theta_{1,x}\leq\cdots\leq
\theta_{x-1,x}\leq\theta_{x,x}.
\end{equation}
Moreover,
%
\begin{equation}
\label{lemma3}\mathcal{A}_{\theta,x}=\{k\in\mathcal{X}: k\geq x\}\quad \mbox{if
and only if} \quad \theta\in(\inf\Theta,\theta_{0,x}),
\end{equation}
and
%
\begin{equation}
\label{lemma2}\mathcal{A}_{\theta,x}=\{k\in\mathcal {X}:k\leq y\}\cup\{k\in
\mathcal{X}: k\geq x\}\quad \mbox{if and only if}\quad \theta\in[\theta_{y,x},
\theta_{y+1,x}).
\end{equation}
\end{lemma}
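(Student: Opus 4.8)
The plan is to reduce the statement to tracking, for fixed $x$, the single integer-valued left endpoint $k_2(\theta,x)$ of $\mathcal{A}_{\theta,x}$ as $\theta$ increases, using the structure $\mathcal{A}_{\theta,x}=\{k\le k_2(\theta,x)\}\cup\{k\ge k_1(\theta,x)\}$ from Proposition \ref{B3prop}. First I would record a duality between the minimizers in the two arguments of $T$. For each index $k$ write $\theta_k$ for the point of B3$^+$(ii) at which $T(\cdot,k)$ is minimized; since $\lambda(\theta_k,k)=1$ forces (by A1) $\mathcal{A}_{\theta_k,k}=\mathcal{X}$, the index $k$ minimizes $T(\theta_k,\cdot)$, i.e. $x_{\theta_k}=k$. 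Because B3$^+$(iii) makes $x_\theta$ nondecreasing in $\theta$, this gives $\theta_y\le\theta_x$ whenever $y<x$. I would also note that for $\theta<\theta_{x,x}=\inf\Theta_1(x)$ one has $x_\theta<x$, so every $k\ge x$ lies on the strictly increasing branch of $T(\theta,\cdot)$; hence $\{k\ge x\}\subseteq\mathcal{A}_{\theta,x}$ and $k_1(\theta,x)=x$ throughout the range relevant to \eqref{lemma3}--\eqref{lemma2}. This fixes the right tail and leaves only the behaviour of $k_2(\theta,x)$ to analyse.

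The technical core is a single-crossing property of $D_y(\theta):=T(\theta,y)-T(\theta,x)$, which controls membership of $y<x$ since $y\in\mathcal{A}_{\theta,x}\iff D_y(\theta)\ge 0$. I would determine its sign in three regimes. For $\theta<\theta_y$, monotonicity of $x_\theta$ gives $x_\theta\le y<x$, so $x$ lies strictly above the minimizer of $T(\theta,\cdot)$ and $T(\theta,x)>T(\theta,y)$, whence $D_y(\theta)<0$; symmetrically, for $\theta>\theta_x$ we get $y<x\le x_\theta$ and $D_y(\theta)>0$. On the middle interval $[\theta_y,\theta_x]$ condition B3$^+$(ii) makes $T(\cdot,y)$ strictly increasing (we are past its minimizer $\theta_y$) and $T(\cdot,x)$ strictly decreasing (we are before its minimizer $\theta_x$), so $D_y$ is strictly increasing there, with $D_y(\theta_y)\le 0\le D_y(\theta_x)$. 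Using the continuity of $T(\cdot,k)$ in $\theta$ furnished by A3, $D_y$ therefore has a unique zero $\theta_{y,x}\in[\theta_y,\theta_x]$, and $\{\theta:\, y\in\mathcal{A}_{\theta,x}\}=[\theta_{y,x},\sup\Theta)$.

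The ordering \eqref{lemma4} would then follow in two pieces. For $0\le y\le x-2$, evaluate $D_y$ at $\theta_{y+1,x}$: there $T(\theta,y+1)=T(\theta,x)$ by definition of $\theta_{y+1,x}$, while $\theta_{y+1,x}>\theta_{y+1}$ (strict, as $x\ne x_{\theta_{y+1}}=y+1$) gives $x_\theta\ge y+1$, placing $y$ strictly on the decreasing branch so that $T(\theta,y)>T(\theta,y+1)=T(\theta,x)$; thus $D_y(\theta_{y+1,x})>0$ and $\theta_{y,x}\le\theta_{y+1,x}$. The remaining inequality $\theta_{x-1,x}\le\theta_{x,x}$ comes from the fact that at $\theta_{x,x}\in\Theta_1(x)$ the index $x$ minimizes $T(\theta,\cdot)$, whence $T(\theta_{x,x},x)\le T(\theta_{x,x},x-1)$ and so $D_{x-1}(\theta_{x,x})\ge 0$.

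Finally I would read off \eqref{lemma3} and \eqref{lemma2}. For $\theta\in[\theta_{y,x},\theta_{y+1,x})$ the single-crossing statement and \eqref{lemma4} give $0,\dots,y\in\mathcal{A}_{\theta,x}$ and $y+1,\dots,x-1\notin\mathcal{A}_{\theta,x}$, so $k_2(\theta,x)=y$, which with the fixed right tail $\{k\ge x\}$ yields $\mathcal{A}_{\theta,x}=\{k\le y\}\cup\{k\ge x\}$; the regime $\theta<\theta_{0,x}$ gives the empty left tail of \eqref{lemma3}, while the top of the chain, $y=x-1$, meets the region $\Theta_1(x)$ at $\theta_{x,x}$. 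Each converse is immediate, since the value of $\mathcal{A}_{\theta,x}$ records exactly which of $y$ and $y+1$ belong to it, and the single-crossing then pins $\theta$ to the stated half-open interval. I expect the single-crossing argument of the second paragraph to be the main obstacle: the delicate point is to identify $[\theta_y,\theta_x]$ as precisely the region on which B3$^+$(ii) forces $D_y$ to be monotone, and to verify that the sign of $D_y$ is already determined outside it; once this is in place, the bookkeeping for \eqref{lemma4}, \eqref{lemma3} and \eqref{lemma2} is routine.
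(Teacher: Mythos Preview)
Your proof is correct and follows essentially the same strategy as the paper: both fix the right tail $k_1(\theta,x)=x$ on $\Theta_\ell(x)$ via B3$^+$(iii), and then track when each $y<x$ enters $\mathcal{A}_{\theta,x}$ by exploiting that $T(\cdot,y)$ is increasing while $T(\cdot,x)$ is decreasing on the relevant range (the paper phrases this as the set inclusion $\{\theta:T(\theta,x)\le T(\theta,y)\}\subset\{\theta:T(\theta,x)\le T(\theta,y-1)\}$, you as the single-crossing of $D_y$). One small correction: A3 concerns differentiability of $\Prob_\theta(X=x)$, not of $T(\cdot,k)$, so continuity of $D_y$ is not furnished by A3; your argument, however, only needs the strict monotonicity of $D_y$ on $[\theta_y,\theta_x]$ supplied by B3$^+$(ii), which already yields a unique crossing point and makes the continuity claim unnecessary.
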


\begin{pf}
First, we establish some facts about $\theta_{y,x}$ and the behaviour
of $T(\theta,\cdot)$ for such $\theta$. If $\theta\in\Theta_1(x)$, then
it follows from \eqref{Aset} that $T(\theta,x)\leq T(\theta,y)$. Thus,
by \eqref{lemma1} we have $\theta_{y,x}\leq\inf\Theta_1(x)$, so by
\eqref{Thetas}, $\theta_{y,x}\in\Theta_\ell(x)$.

With $x_\theta$ as defined in B3$^+$(i), $T(\theta,\cdot)$ is
increasing at $x$ if $x>x_\theta$. If $\theta\in\Theta_1(x)$ then
$x_\theta=x$. By B3$^+$(iii), $x_\theta$ is an increasing function of
$\theta$. Thus, if $\theta\leq\inf\Theta_1(x)$, i.e. $\theta\in\Theta
_\ell(x)$, we have $x>x_\theta$. Since $\theta_{y,x}\in\Theta_\ell(x)$,
$T(\theta_{y,x},\cdot)$ is increasing at $x$.

It now follows that for any $y<x$, $T(\theta,x)<T(\theta,y)$ can happen
only if $T(\theta,\cdot)$ is decreasing at $y$. Whenever $T(\theta
,x)<T(\theta,y)$ and $T(\theta,\cdot)$ is decreasing at $y$, we have
$T(\theta,x)<T(\theta,y)<T(\theta,y-1)$, and \eqref{lemma4} follows
since $\{\theta: T(\theta,x)\leq T(\theta,y)\}\subset\{\theta: T(\theta
,x)\leq T(\theta,y-1)\}$.

Let\vspace*{1pt} $x\geq1$ be fixed. If $\theta\leq\theta_{0,x}$ then $T(\theta
,x)<T(\theta,0)$ and \eqref{lemma4} ensures that $T(\theta,x)<T(\theta
,y)$ for all other $y<x$ as well. \eqref{lemma3} now follows from \eqref{Aset}.

Next, for some $y<x$, let $\theta\in\Theta_\ell(x)$ be such that $\theta
>\theta_{y,x}$. Under B3$^+$(ii) we have $\theta<\theta_x\in\Theta
_1(x)$, so $T(\cdot,x)$ is decreasing in $\Theta_\ell(x)$. However, if
$T(\theta,x)<T(\theta,y)$ then $y<x_\theta$, so $\theta\geq\sup\Theta
_1(y)$. Thus $\theta_{x,y}>\theta_y$, implying that $T(\cdot,y)$ is
increasing at $\theta_{y,x}$. Thus\vspace*{1pt} $T(\theta,x)<T(\theta,y)$ for all
$\theta>\theta_{y,x}$. Equations \eqref{lemma3} and \eqref{lemma2} now follow
from \eqref{Aset}.
\end{pf}

%

\subsection{Proof of Proposition \texorpdfstring{\protect\ref{smallprop}}{4}}
We start by showing (b) and finish by proving (a). The proof of (c) is
analogous to the proof of~(b), and is therefore omitted.

(b) We wish to find the largest $\alpha_L$ such that, for all $x\in
\mathcal{X}$, $L_\alpha(x)$ is strictly monotone in $\alpha$ when
$\alpha<\alpha_L$. For a given $x$, let $\alpha_L(x)$ be the largest
$\alpha$ such that $L_\alpha(x)$ is strictly monotone in $\alpha$ when
$\alpha<\alpha_L(x)$. Then $\alpha_L\leq\alpha_L(x)$ for all $x$, with
equality for some $x$. We therefore show the statement by showing that
$\alpha_L(x)=\inf_{\theta\in\{\theta: T(\theta,0)>T(\theta,x)\}}\lambda
(\theta,x)>0$.

As in the proof of Lemma~\ref{lemmat}, it suffices to study $\theta\in
\Theta_\ell(x)$, where $\Theta_\ell(x)$ is defined as in~\eqref{Thetas}.
\begin{longlist}[(iii)]
\item[(i)]  Let $x\geq1$ be fixed. If $\theta\leq\theta_{0,x}$, defined as in
\eqref{lemma1}, then by \eqref{lemma3}, $\mathcal{A}_{\theta,x}=\{
k:k\geq x\}$. Thus, by \eqref{Aset}, $\lambda(\theta,x)=\Prob_\theta
(X\geq x)$. The $p$-value function $\lambda(\cdot,x)$ is therefore
non-negative (by~A1), yielding (i).

\item[(ii)]  $\lambda(\cdot,x)$ is strictly increasing (by A2$^+$) and
continuous (by A3). We extend the $p$-value function by defining $\lambda
(\inf\Theta,x):=\lim_{\theta\searrow\inf\Theta} \lambda(\theta,x)$.
Then $\lambda(\theta,x)$ is a continuous strictly monotone bijection
from the compact set ${}[\inf\Theta,\theta_{0,x}]$ to the
compact set ${}[\lambda(\inf\Theta,x),\alpha_L(x)]$. It is
therefore a homeomorphism, and it follows that its inverse $L_\alpha
(x)$ is continuous and strictly monotone in $\alpha$, which yields (ii).

\item[(iii)] 
From Lemmas \ref{lemmat} and \ref{B4}, it follows that the first
discontinuity in $\lambda(\theta,x)$ occurs at $\theta_{0,x}$. From
Definition~\ref{def3}, it follows that there exists an $\varepsilon>0$
such that $L_\alpha(x)$ is constant in $(\alpha_L(x),\alpha
_L(x)+\varepsilon)$ if and only if $\lambda(\theta,x)>\alpha_L(x)$ for all
$\theta\in{}[\theta_{0,x},\inf\Theta_1(x)]$.
\end{longlist}

By \eqref{lemma2} for any $\theta\in{}[\theta_{0,x},\inf\Theta
_1(x)]$, there exists a $y<x$ such that $\theta\in{}[\theta
_{y,x},\theta_{y+1,x})$, so that
%
\begin{eqnarray}
\lambda(\theta,x)&=& \Prob_\theta(X\leq y)+\Prob_\theta(X\geq
x)>\varepsilon +\Prob_\theta(X\geq x)
\nonumber
\\[-8pt]
\label{epsiloneq}
\\[-8pt]
\nonumber
&>& \varepsilon+\Prob_{\theta_{0,x}}(X\geq x)=\varepsilon+
\alpha_L(x)>\alpha_L(x),
\end{eqnarray}
where the first inequality follows from A1 and the second inequality
follows from A2$^+$. (iii) now follows.

\begin{longlist}[(a)]
\item[(a)]  For any $x\in\mathcal{X}$, let $A_L(x)$ denote the set of $\alpha$
for which $L_\alpha(x)$ is locally constant in $\alpha$ and $A_U(x)$
denote the set of $\alpha$ for which $U_\alpha(x)$ is locally
constant in $\alpha$. By definition, the largest $\alpha$ for which
$I_\alpha(X)$ is strictly nested is
\[
\alpha_{\mathrm{nest}}=\inf \biggl\{\alpha: \exists\varepsilon>0 \mbox{ for which
} (\alpha,\alpha+\varepsilon)\subseteq\bigcup_{x\in\mathcal{X}}
\bigl( A_L(x)\cap A_U(x) \bigr) \biggr\}.
\]
Using Proposition~\ref{thm3}(c), $A_L(x)\cap A_U(x)$ has a connected
uncountable subset for all $x$, so $\alpha_{\mathrm{nest}}$ always exists. By
part (b) of Proposition~\ref{smallprop}, $\alpha_{\mathrm{nest}}\geq\alpha_L>0$.
\end{longlist}

\subsection{Proof of Proposition \texorpdfstring{\protect\ref{proposition1}}{6}}
By Proposition~\ref{thm3}(a) and A3, $\lambda(\theta,x)$ is a
piecewise continuous function. By Lemma~\ref{B4}, it is not continuous
at the boundaries of the set $\Theta_1(x)$. Hence $\lambda(\theta,x)$
can only be bimonotone if it is monotone whenever it is continuous.
Each of its continuous parts can be represented by equation (\ref
{lambdaeq}) with fixed $k_1$ and $k_2$. Such a part can be written as
%
\begin{equation}
\label{lambdaeq2} 1-\sum_{k\leq k_1-1 }\Prob_\theta(X=k)+
\sum_{k\leq k_2 }\Prob_\theta(X=k).
\end{equation}
By A2,\vspace*{1.5pt} $1-\sum_{k\leq k_1-1 }\Prob_\theta(X=k)$ is strictly increasing
and $\sum_{k\leq k_2 }\Prob_\theta(X=k)$ is strictly decreasing. By
condition A4 (\ref{lambdaeq2}) equals 1 at the boundaries of $\Theta$.
If it is not constant, it must therefore by condition A5 have a unique
minimum in the interior of $\Theta$. Rewriting the expression again, we have
\[
\mbox{(\ref{lambdaeq2})}=1-\sum_{ k_2+1\leq k\leq k_1-1 }\Prob_\theta(X=k),
\]
so that the minimum is given by the root $\theta_r$ of the equation
%
\begin{equation}
\label{root} \frac{d}{d\theta}\sum_{k=k_2+1}^{k_1-1}
\Prob_\theta(X=k)=\sum_{k=k_2+1}^{k_1-1}
\frac{d}{d\theta}\Prob_\theta(X=k)=0
\end{equation}
that is in the interior of $\Theta$. Next, we let $k_1$ and $k_2$ vary
as functions of $(\theta,x)$ and use $\theta_r(\theta,x)$ to denote the
solution of \eqref{root} with $k_1=k_1(\theta,x)$ and $k_2=k_2(\theta,x)$.

By Proposition~\ref{thm3}(a), $\lambda(\theta,x)$ has jumps
corresponding to changes in $k_1(\theta,x)$ or $k_2(\theta,x)$. $\lambda
(\theta,x)$ fails to be bimonotone if
\[
\bigl(k_1(\theta,x),~k_2(\theta,x) \bigr)=
\bigl(k_1\bigl(\theta_r(\theta ,x)+\varepsilon,x
\bigr),~k_2\bigl(\theta_r(\theta,x)+\varepsilon,x\bigr)
\bigr)\qquad \mbox{for some }\varepsilon>0,
\]
i.e. if it does not jump before the root $\theta_r(\theta,x)$ that
corresponds to $(k_1(\theta,x),k_2(\theta,x))$, since for $\lambda
'(\theta,x)=\frac{d}{d\theta}\lambda(\theta,x)$,
\[
\operatorname{sign} \bigl(\lambda'\bigl(\theta_r(\theta,x)-
\varepsilon\bigr) \bigr)\neq\operatorname{sign} \bigl(\lambda'\bigl(
\theta_r(\theta,x)+\varepsilon\bigr) \bigr).
\]

Assume that $\theta\in\Theta_u(x)$. Then $\lambda(\theta,x)$ should be
decreasing in $\theta$. If $\theta>\theta_r(\theta,x)$ then (\ref
{lambdaeq2}) with $k_1=k_1(\theta,x)$ and $k_2=k_2(\theta,x)$ is
increasing, so that $\lambda(\theta,x)$ is not bimonotone. If instead
we assume that $\theta\in\Theta_\ell(x)$ so that $\lambda(\theta,x)$ is
in its increasing part, we similarly get that $\lambda(\theta,x)$ is
not bimonotone if $\theta<\theta_r(\theta,x)$. This establishes (a).
Part (b) then follows from Proposition~\ref{propbim}.

\subsection{Proof of Proposition \texorpdfstring{\protect\ref{fidpropr}}{8}}
Let $\overline{\mathbb{R}}$ be the extended real line and $\overline
{\Theta}\subseteq\overline{\mathbb{R}}$ be the closure of $\Theta$. Let
$F(\theta,x)=\Prob_\theta(X\leq x)$. By~A1--A4, $F(\cdot,x)$ is a
continuous monotone bijection from $\overline{\Theta}$ to ${}[
0,1]$ for all $x\in\mathcal{X}\setminus\sup\mathcal{X}$. Since
$\overline{\Theta}$ and ${}[0,1 ]$ both are compact, it
follows that $F(\cdot,x)$ is a homeomorphism, which ensures that the
bounds given by (\ref{fidint}) are continuous in $\alpha$. The
monotonicity of $F_\theta(\cdot,x)$ ensures that both $F_\theta
^{-1}(\cdot,x)$ and the bounds are monotone, so that the interval is
strictly nested.

Finally, by condition A3, the $p$-value function (\ref{fidtest}) is
continuous in $\theta$ when $x\in\mathcal{X}$ is fixed.
\end{appendix}

\section*{Acknowledgments}
The authors wish to thank the Editor and the reviewers for comments
that helped improve the paper.






\printhistory

\begin{thebibliography}{38}


\bibitem{agresti2}
\begin{barticle}[mr]
\bauthor{\bsnm{Agresti},~\bfnm{A.}\binits{A.}}
(\byear{2003}).
\btitle{Dealing with discreteness: Making ``exact'' confidence intervals for proportions, differences of proportions, and odds ratios more exact}.
\bjournal{Stat. Methods Med. Res.}
\bvolume{12}
\bpages{3--21}.
\bid{doi={10.1191/0962280203sm311ra}, issn={0962-2802}, mr={1977232}}
\end{barticle}
%

\bptok{imsref}%
\endbibitem

\bibitem{agresti1}
\begin{barticle}[mr]
\bauthor{\bsnm{Agresti},~\bfnm{Alan}\binits{A.}} \AND
\bauthor{\bsnm{Min},~\bfnm{Yongyi}\binits{Y.}}
(\byear{2001}).
\btitle{On small-sample confidence intervals for parameters in discrete distributions}.
\bjournal{Biometrics}
\bvolume{57}
\bpages{963--971}.
\bid{doi={10.1111/j.0006-341X.2001.00963.x}, issn={0006-341X}, mr={1863460}}
\end{barticle}
%

\bptok{imsref}%
\endbibitem

\bibitem{birnbaum}
\begin{barticle}[mr]
\bauthor{\bsnm{Birnbaum},~\bfnm{Allan}\binits{A.}}
(\byear{1961}).
\btitle{Confidence curves: An omnibus technique for estimation and testing statistical hypotheses}.
\bjournal{J. Amer. Statist. Assoc.}
\bvolume{56}
\bpages{246--249}.
\bid{issn={0162-1459}, mr={0121904}}
\end{barticle}
%

\bptok{imsref}%
\endbibitem

\bibitem{bl1}
\begin{barticle}[mr]
\bauthor{\bsnm{Blaker},~\bfnm{Helge}\binits{H.}}
(\byear{2000}).
\btitle{Confidence curves and improved exact confidence intervals for discrete distributions}.
\bjournal{Canad. J. Statist.}
\bvolume{28}
\bpages{783--798}.
\bid{doi={10.2307/3315916}, issn={0319-5724}, mr={1821434}}
\end{barticle}
%

\bptok{imsref}%
\endbibitem

\bibitem{bs1}
\begin{barticle}[mr]
\bauthor{\bsnm{Blyth},~\bfnm{Colin~R.}\binits{C.R.}} \AND
\bauthor{\bsnm{Still},~\bfnm{Harold~A.}\binits{H.A.}}
(\byear{1983}).
\btitle{Binomial confidence intervals}.
\bjournal{J. Amer. Statist. Assoc.}
\bvolume{78}
\bpages{108--116}.
\bid{issn={0162-1459}, mr={0696854}}
\end{barticle}
%

\bptok{imsref}%
\endbibitem

\bibitem{bolshev65}
\begin{barticle}[mr]
\bauthor{\bsnm{Bol'{\v{s}}ev},~\bfnm{L.~N.}\binits{L.N.}}
(\byear{1965}).
\btitle{On the construction of confidence limits}.
\bjournal{Teor. Verojatnost. i Primenen.}
\bvolume{10}
\bpages{187--192}.
\bid{issn={0040-361X}, mr={0210218}}
\bptnote{check pages}%
\end{barticle}
%

\bptok{imsref}%
\endbibitem

\bibitem{bcd1}
\begin{barticle}[mr]
\bauthor{\bsnm{Brown},~\bfnm{Lawrence~D.}\binits{L.D.}},
\bauthor{\bsnm{Cai},~\bfnm{T.~Tony}\binits{T.T.}} \AND
\bauthor{\bsnm{DasGupta},~\bfnm{Anirban}\binits{A.}}
(\byear{2001}).
\btitle{Interval estimation for a binomial proportion}.
\bjournal{Statist. Sci.}
\bvolume{16}
\bpages{101--133}.
\bid{doi={10.1214/ss/1009213286}, issn={0883-4237}, mr={1861069}}
\bptnote{check related}%
\end{barticle}
%

\bptok{imsref}%
\endbibitem

\bibitem{byrne05}
\begin{barticle}[mr]
\bauthor{\bsnm{Byrne},~\bfnm{John}\binits{J.}} \AND
\bauthor{\bsnm{Kabaila},~\bfnm{Paul}\binits{P.}}
(\byear{2005}).
\btitle{Comparison of {P}oisson confidence intervals}.
\bjournal{Comm. Statist. Theory Methods}
\bvolume{34}
\bpages{545--556}.
\bid{doi={10.1081/STA-200052109}, issn={0361-0926}, mr={2181200}}
\end{barticle}
%

\bptok{imsref}%
\endbibitem

\bibitem{ca1}
\begin{barticle}[mr]
\bauthor{\bsnm{Casella},~\bfnm{George}\binits{G.}}
(\byear{1986}).
\btitle{Refining binomial confidence intervals}.
\bjournal{Canad. J. Statist.}
\bvolume{14}
\bpages{113--129}.
\bid{doi={10.2307/3314658}, issn={0319-5724}, mr={0849867}}
\end{barticle}
%

\bptok{imsref}%
\endbibitem


\bibitem{casella89}
\begin{barticle}[mr]
\bauthor{\bsnm{Casella},~\bfnm{George}\binits{G.}} \AND
\bauthor{\bsnm{Robert},~\bfnm{Christian}\binits{C.}}
(\byear{1989}).
\btitle{Refining {P}oisson confidence intervals}.
\bjournal{Canad. J. Statist.}
\bvolume{17}
\bpages{45--57}.
\bid{doi={10.2307/3314762}, issn={0319-5724}, mr={1014090}}
\end{barticle}
%

\bptok{imsref}%
\endbibitem

\bibitem{cp1}
\begin{barticle}[auto:parserefs-M02]
\bauthor{\bsnm{Clopper},~\bfnm{C.~J.}\binits{C.J.}} \AND
\bauthor{\bsnm{Pearson},~\bfnm{E.~S.}\binits{E.S.}}
(\byear{1934}).
\btitle{The use of confidence or fiducial limits illustrated in the case of the binomial}.
\bjournal{Biometrika}
\bvolume{26}
\bpages{404--413}.
\end{barticle}
%

\bptok{imsref}%
\endbibitem

\bibitem{cohen}
\begin{barticle}[mr]
\bauthor{\bsnm{Cohen},~\bfnm{Arthur}\binits{A.}} \AND
\bauthor{\bsnm{Strawderman},~\bfnm{William~E.}\binits{W.E.}}
(\byear{1973}).
\btitle{Admissibility implications for different criteria in confidence estimation}.
\bjournal{Ann. Statist.}
\bvolume{1}
\bpages{363--366}.
\bid{issn={0090-5364}, mr={0348875}}
\end{barticle}
%

\bptok{imsref}%
\endbibitem

\bibitem{st2}
\begin{barticle}[mr]
\bauthor{\bsnm{Crow},~\bfnm{Edwin~L.}\binits{E.L.}}
(\byear{1956}).
\btitle{Confidence intervals for a proportion}.
\bjournal{Biometrika}
\bvolume{43}
\bpages{423--435}.
\bid{issn={0006-3444}, mr={0093077}}
\end{barticle}
%

\bptok{imsref}%
\endbibitem

\bibitem{crow56}
\begin{barticle}[mr]
\bauthor{\bsnm{Crow},~\bfnm{Edwin~L.}\binits{E.L.}} \AND
\bauthor{\bsnm{Gardner},~\bfnm{Robert~S.}\binits{R.S.}}
(\byear{1959}).
\btitle{Confidence intervals for the expectation of a {P}oisson variable}.
\bjournal{Biometrika}
\bvolume{46}
\bpages{441--453}.
\bid{issn={0006-3444}, mr={0125681}}
\end{barticle}
%

\bptok{imsref}%
\endbibitem

\bibitem{fay10a}
\begin{barticle}[pbm]
\bauthor{\bsnm{Fay},~\bfnm{Michael~P.}\binits{M.P.}}
(\byear{2010}).
\btitle{Confidence intervals that match Fisher's exact or Blaker's exact tests}.
\bjournal{Biostatistics}
\bvolume{11}
\bpages{373--374}.
\bid{doi={10.1093/biostatistics/kxp050}, issn={1468-4357}, pii={kxp050}, pmcid={2852239}, pmid={19948745}}
\end{barticle}
%

\bptok{imsref}%
\endbibitem

\bibitem{fay10b}
\begin{barticle}[auto:parserefs-M02]
\bauthor{\bsnm{Fay},~\bfnm{M.~P.}\binits{M.P.}}
(\byear{2010}).
\btitle{Two-sided exact tests and matching confidence intervals for discrete data}.
\bjournal{R Journal}
\bvolume{2}
\bpages{53--58}.
\end{barticle}
%

\bptok{imsref}%
\endbibitem

\bibitem{fisher30}
\begin{barticle}[auto:parserefs-M02]
\bauthor{\bsnm{Fisher},~\bfnm{R.~A.}\binits{R.A.}}
(\byear{1930}).
\btitle{Inverse probability}.
\bjournal{Proc. Camb. Philos. Soc.}
\bvolume{26}
\bpages{528--535}.
\end{barticle}
%

\bptok{imsref}%
\endbibitem

\bibitem{garwood36}
\begin{barticle}[auto:parserefs-M02]
\bauthor{\bsnm{Garwood},~\bfnm{F.}\binits{F.}}
(\byear{1936}).
\btitle{Fiducial limits for the Poisson distribution}.
\bjournal{Biometrika}
\bvolume{28}
\bpages{437--442}.
\end{barticle}
%

\bptok{imsref}%
\endbibitem

\bibitem{lurz}
\begin{barticle}[mr]
\bauthor{\bsnm{G{\"o}b},~\bfnm{Rainer}\binits{R.}} \AND
\bauthor{\bsnm{Lurz},~\bfnm{Kristina}\binits{K.}}
(\byear{2014}).
\btitle{Design and analysis of shortest two-sided confidence intervals for a~probability under prior information}.
\bjournal{Metrika}
\bvolume{77}
\bpages{389--413}.
\bid{doi={10.1007/s00184-013-0445-9}, issn={0026-1335}, mr={3175131}}
\end{barticle}
%

\bptok{imsref}%
\endbibitem

\bibitem{hirji06}
\begin{bbook}[mr]
\bauthor{\bsnm{Hirji},~\bfnm{Karim~F.}\binits{K.F.}}
(\byear{2006}).
\btitle{Exact Analysis of Discrete Data}.
\blocation{Boca Raton, FL}:
\bpublisher{Chapman \& Hall/CRC}.
\bid{mr={2193238}}
\end{bbook}
%

\bptok{imsref}%
\endbibitem

\bibitem{byrne01b}
\begin{barticle}[mr]
\bauthor{\bsnm{Kabaila},~\bfnm{Paul}\binits{P.}} \AND
\bauthor{\bsnm{Byrne},~\bfnm{John}\binits{J.}}
(\byear{2001}).
\btitle{Exact short {P}oisson confidence intervals}.
\bjournal{Canad. J. Statist.}
\bvolume{29}
\bpages{99--106}.
\bid{doi={10.2307/3316053}, issn={0319-5724}, mr={1834489}}
\end{barticle}
%

\bptok{imsref}%
\endbibitem

\bibitem{lp14}
\begin{barticle}[auto:parserefs-M02]
\bauthor{\bsnm{Lecoutre},~\bfnm{B.}\binits{B.}} \AND
\bauthor{\bsnm{Poitevineau},~\bfnm{J.}\binits{J.}}
(\byear{2014}).
\btitle{New results for computing Blaker's exact confidence interval for one parameter discrete distributions}.
\bjournal{Comm. Statist. Simulation Comput.}
\bnote{DOI:\doiurl{10.1080/03610918.2014.911900}.}
\bid{doi={10.1080/03610918.2014.911900}}
\end{barticle}
%

\bptok{imsref}%
\endbibitem

\bibitem{liese}
\begin{bbook}[mr]
\bauthor{\bsnm{Liese},~\bfnm{Friedrich}\binits{F.}} \AND
\bauthor{\bsnm{Miescke},~\bfnm{Klaus-J.}\binits{K.-J.}}
(\byear{2008}).
\btitle{Statistical Decision Theory: Estimation, Testing, and Selection}.
\bseries{Springer Series in Statistics}.
\blocation{New York}:
\bpublisher{Springer}.
\bid{mr={2421720}}
\end{bbook}
%

\bptok{imsref}%
\endbibitem

\bibitem{ne2}
\begin{barticle}[mr]
\bauthor{\bsnm{Newcombe},~\bfnm{Robert~G.}\binits{R.G.}}
(\byear{2011}).
\btitle{Measures of location for confidence intervals for proportions}.
\bjournal{Comm. Statist. Theory Methods}
\bvolume{40}
\bpages{1743--1767}.
\bid{doi={10.1080/03610921003646406}, issn={0361-0926}, mr={2781501}}
\end{barticle}
%

\bptok{imsref}%
\endbibitem

\bibitem{re1}
\begin{barticle}[pbm]
\bauthor{\bsnm{Reiczigel},~\bfnm{Jen{\"{o}}}\binits{J.}}
(\byear{2003}).
\btitle{Confidence intervals for the binomial parameter: Some new considerations}.
\bjournal{Stat. Med.}
\bvolume{22}
\bpages{611--621}.
\bid{doi={10.1002/sim.1320}, issn={0277-6715}, pmid={12590417}}
\end{barticle}
%

\bptok{imsref}%
\endbibitem

\bibitem{schilling14}
\begin{barticle}[mr]
\bauthor{\bsnm{Schilling},~\bfnm{Mark~F.}\binits{M.F.}} \AND
\bauthor{\bsnm{Doi},~\bfnm{Jimmy~A.}\binits{J.A.}}
(\byear{2014}).
\btitle{A coverage probability approach to finding an optimal binomial confidence procedure}.
\bjournal{Amer. Statist.}
\bvolume{68}
\bpages{133--145}.
\bid{doi={10.1080/00031305.2014.899274}, issn={0003-1305}, mr={3246552}}
\end{barticle}
%

\bptok{imsref}%
\endbibitem

\bibitem{sommerville13}
\begin{barticle}[auto:parserefs-M02]
\bauthor{\bsnm{Sommerville},~\bfnm{M.~C.}\binits{M.C.}} \AND
\bauthor{\bsnm{Brown},~\bfnm{R.~S.}\binits{R.S.}}
(\byear{2013}).
\btitle{Exact likelihood ratio and score confidence intervals for the binomial proportion}.
\bjournal{Pharmaceutical Statistics}
\bvolume{12}
\bpages{120--128}.
\end{barticle}
%

\bptok{imsref}%
\endbibitem

\bibitem{st3}
\begin{barticle}[mr]
\bauthor{\bsnm{Sterne},~\bfnm{Theodore~E.}\binits{T.E.}}
(\byear{1954}).
\btitle{Some remarks on confidence or fiducial limits}.
\bjournal{Biometrika}
\bvolume{41}
\bpages{275--278}.
\bid{issn={0006-3444}, mr={0062387}}
\end{barticle}
%

\bptok{imsref}%
\endbibitem

\bibitem{thu3}
\begin{barticle}[mr]
\bauthor{\bsnm{Thulin},~\bfnm{M{\aa}ns}\binits{M.}}
(\byear{2014}).
\btitle{On split sample and randomized confidence intervals for binomial proportions}.
\bjournal{Statist. Probab. Lett.}
\bvolume{92}
\bpages{65--71}.
\bid{doi={10.1016/j.spl.2014.05.005}, issn={0167-7152}, mr={3230474}}
\end{barticle}
%

\bptok{imsref}%
\endbibitem

\bibitem{thu1}
\begin{barticle}[mr]
\bauthor{\bsnm{Thulin},~\bfnm{M{\aa}ns}\binits{M.}}
(\byear{2014}).
\btitle{Coverage-adjusted confidence intervals for a binomial proportion}.
\bjournal{Scand. J. Stat.}
\bvolume{41}
\bpages{291--300}.
\bid{doi={10.1111/sjos.12021}, issn={0303-6898}, mr={3207171}}
\end{barticle}
%

\bptok{imsref}%
\endbibitem

\bibitem{thu2}
\begin{barticle}[mr]
\bauthor{\bsnm{Thulin},~\bfnm{M{\aa}ns}\binits{M.}}
(\byear{2014}).
\btitle{The cost of using exact confidence intervals for a binomial proportion}.
\bjournal{Electron. J.~Stat.}
\bvolume{8}
\bpages{817--840}.
\bid{doi={10.1214/14-EJS909}, issn={1935-7524}, mr={3217790}}
\end{barticle}
%

\bptok{imsref}%
\endbibitem

\bibitem{vh2}
\begin{barticle}[mr]
\bauthor{\bsnm{Vos},~\bfnm{Paul~W.}\binits{P.W.}} \AND
\bauthor{\bsnm{Hudson},~\bfnm{Suzanne}\binits{S.}}
(\byear{2005}).
\btitle{Evaluation criteria for discrete confidence intervals: Beyond coverage and length}.
\bjournal{Amer. Statist.}
\bvolume{59}
\bpages{137--142}.
\bid{doi={10.1198/000313005X42453}, issn={0003-1305}, mr={2133560}}
\end{barticle}
%

\bptok{imsref}%
\endbibitem

\bibitem{vh1}
\begin{barticle}[mr]
\bauthor{\bsnm{Vos},~\bfnm{Paul~W.}\binits{P.W.}} \AND
\bauthor{\bsnm{Hudson},~\bfnm{Suzanne}\binits{S.}}
(\byear{2008}).
\btitle{Problems with binomial two-sided tests and the associated confidence intervals}.
\bjournal{Aust. N. Z. J. Stat.}
\bvolume{50}
\bpages{81--89}.
\bid{doi={10.1111/j.1467-842X.2007.00501.x}, issn={1369-1473}, mr={2414657}}
\end{barticle}
%

\bptok{imsref}%
\endbibitem

\bibitem{wang06}
\begin{barticle}[mr]
\bauthor{\bsnm{Wang},~\bfnm{Weizhen}\binits{W.}}
(\byear{2006}).
\btitle{Smallest confidence intervals for one binomial proportion}.
\bjournal{J. Statist. Plann. Inference}
\bvolume{136}
\bpages{4293--4306}.
\bid{doi={10.1016/j.jspi.2005.08.044}, issn={0378-3758}, mr={2323417}}
\end{barticle}
%

\bptok{imsref}%
\endbibitem

\bibitem{wang14}
\begin{barticle}[auto:parserefs-M02]
\bauthor{\bsnm{Wang},~\bfnm{W.}\binits{W.}}
(\byear{2014}).
\btitle{Exact optimal confidence intervals for hypergeometric parameters}.
\bjournal{J. Amer. Statist. Assoc.}
\bnote{To appear. DOI:\doiurl{10.1080/01621459.2014.966191}.}
\bid{doi={10.1080/01621459.2014.966191}}
\end{barticle}
%

\bptok{imsref}%
\endbibitem

\bibitem{wang00}
\begin{barticle}[mr]
\bauthor{\bsnm{Wang},~\bfnm{Y.~H.}\binits{Y.H.}}
(\byear{2000}).
\btitle{Fiducial intervals: What are they?}
\bjournal{Amer. Statist.}
\bvolume{54}
\bpages{105--111}.
\bid{doi={10.2307/2686026}, issn={0003-1305}, mr={1803120}}
\end{barticle}
%

\bptok{imsref}%
\endbibitem

\bibitem{xie}
\begin{barticle}[mr]
\bauthor{\bsnm{Xie},~\bfnm{Min-ge}\binits{M.-g.}} \AND
\bauthor{\bsnm{Singh},~\bfnm{Kesar}\binits{K.}}
(\byear{2013}).
\btitle{Confidence distribution, the frequentist distribution estimator of a~parameter: A review}.
\bjournal{Int. Stat. Rev.}
\bvolume{81}
\bpages{3--39}.
\bid{doi={10.1111/insr.12000}, issn={0306-7734}, mr={3047496}}
\end{barticle}
%

\bptok{imsref}%
\endbibitem
\end{thebibliography}
\end{document}